\documentclass[a4paper, 11pt]{article}
\addtolength{\textwidth}{1.2cm}

\usepackage[utf8]{inputenc}
\usepackage{amssymb}
\usepackage{amsmath}
\usepackage{amsthm}
\usepackage{bbm}
\usepackage{dsfont}
\usepackage{framed}
\usepackage{svg}
\usepackage{graphicx}
\usepackage[center]{caption}
\usepackage{color,xcolor}
\usepackage{contour}
\usepackage{mathtools}
\mathtoolsset{showonlyrefs}

\providecommand{\NN}{\ensuremath{\mathds{N}}}
\providecommand{\RR}{\ensuremath{\mathbb{R}}}

\providecommand{\ZZ}{\ensuremath{\mathds{Z}}}
\providecommand{\QQ}{\ensuremath{\mathbb{Q}}}

\providecommand{\MM}{\ensuremath{\mathbb{M}_h^d}}
\providecommand{\EE}{\ensuremath{\mathds{E}}}
\providecommand{\VV}{\ensuremath{\mathds{V}}}

\providecommand{\PP}{\ensuremath{\mathds{P}}}
\providecommand{\1}{\ensuremath{\mathbbm{1}}}
\providecommand{\Cyl}{\ensuremath{\mathrm{Cyl}}}

\newtheorem{theorem}{Theorem}
\newtheorem{lemma}[theorem]{Lemma}

\newtheorem{definition}[theorem]{Definition}
\newtheorem{remark}[theorem]{Remark}
\numberwithin{equation}{section}
\numberwithin{theorem}{section}

\title{\textbf{Asymptotics of a time bounded cylinder model}}
\author{Nils Aschenbruck\thanks{Osnabrück University, Institute of Computer Science, D-49076 Osnabrück, Germany, \quad \textbf{aschenbruck@uni-osnabrueck.de}}, Stephan Bussmann and Hanna Döring \thanks{Osnabrück University, Institute of Mathematics, D-49076 Osnabrück, Germany, \quad \textbf{stephan.bussmann@uni-osnabrueck.de, hanna.doering@uni-osnabrueck.de}}}
\date{\today}


\makeatletter
\renewcommand*{\@fnsymbol}[1]{\ensuremath{\ifcase#1\or \circ\or \bullet\or *\or \dagger\or \ddagger\or%
    \cdot\or \times\or \checkmark\or **\or \dagger\dagger%
    \or \ddagger\ddagger \else\textsuperscript{\expandafter\romannumeral#1}\fi}}
\makeatother

\begin{document}
	\maketitle

	\begin{abstract}
		\noindent One way to model telecommunication networks are static Boolean models. However, dynamics such as node mobility have a significant impact on the performance evaluation of such networks. Consider a Boolean model in $\mathbb{R}^d$ and a random direction movement scheme. Given a fixed time horizon $T>0$, we model these movements via cylinders in $\mathbb{R}^d \times [0,T]$. In this work, we derive central limit theorems for functionals of the union of these cylinders. The volume and the number of isolated cylinders and the Euler characteristic of the random set are considered and give an answer to the achievable throughput, the availability of nodes, and the topological structure of the network.
		\\~\\
		\emph{2020 Mathematics Subject Classification:} Primary 60D05, Secondary 60F05
		\\~\\
		\emph{Keywords}: random direction, mobility modeling, central limit theorem,  Poisson cylinder process, stochastic geometry
	\end{abstract}

\section{Introduction}

In the domain of telecommunication network performance evaluation, one of the oldest models used for the dynamics of moving nodes is the random direction mobility model (for reference see \cite{Royer2001} or \cite{Camp2002}).
Nowadays, it is available within standard evaluation frameworks surveyed in \cite{Dede2018}. The random direction model belongs to the group of standard, basic models used in nearly every performance evaluation to understand the elementary properties in well-known standard scenarios before considering more sophisticated and more realistic ones. Within a performance analysis, it is common to analyze series of states of the model indexed by time.
However, for networks that are delay tolerant, such as opportunistic networks, longer periods of time are more interesting.
Opportunistic networks follow the store-carry-forward paradigm, where the movement of the devices is used to carry the messages to their destinations.
In this domain, it is particularly interesting to answer certain questions for a longer period of time such as:
\begin{itemize}
	\item[(1)] Will a message reach a node (at all)?
	\item[(2)] Does a node stay isolated?
\end{itemize}
Since it is clear that mobility changes the properties of a network, there are first results to consider the dynamics of mobile communication networks, see \cite{DMPG09} for example starting with discrete snapshots of the random direction model on the torus. In \cite{DFK16}, the nodes of the telecommunication network move according to the random waypoint model and the connection time of two random nodes is studied. The very recent work \cite{HJC21} also deals with a dynamic Boolean model, where Hirsch, Jahnel and Cali consider two different kinds of movement, small movements with a small velocity and a few large movements with a high velocity. They derive asymptotic formulas for the percolation time and the $k$-hop connection time.\\
In this paper we propose the time bounded cylinder (TBC) model which enables an analysis over a complete timeframe by modeling the movement in $\RR^d$ and the resulting communication capabilities via cylinders in $\RR^d \times [0,T]$. Our first step will be to establish the theory for a rigid movement model where nodes pick a direction and velocity at random and keep these for the complete timeframe. In a second step we will propose a method to introduce changes in direction and velocity.
\\~\\
The following is a slightly simplified construction, which will be made more precise in Section \ref{Section_Construction_of_Cylinders}. We take a set of points in $\RR^d \times \{0\}$ sampled from a homogeneous Poisson point process with intensity $0 < \gamma< \infty$. For each of these points $p_0$ we randomly choose a vector $v$ from the upper half of the $d+1$-dimensional unit sphere. Then we fix some $T\geq 0$ and look at $p_0+v \in \RR^{d+1}$. We scale this vector to a length so that its tip $p_T$ lies in the affine space $\RR^d \times \{T\}$. We fix some radius $r>0$ and for  every point $x$ on this line consider the ball $\mathbb{B}_{d}(x,r)$. The union of these balls defines the time bounded cylinder on $p_0$ in direction $v$. Let $Z$ denote the union of all cylinders created this way for some Poisson process and let $W_s = [-\frac{s}{2}, \frac{s}{2}]^d \times [0,T]$. Our first main result is this:

\begin{theorem}\label{Theorem_Intro_TBC_Vol}
	As $s \to \infty$, the standardized covered volume of the TBC model satisfies
	\begin{equation*}
		\frac{\lambda_{d+1}(Z \cap W_s) - \EE[\lambda_{d+1}(Z \cap W_s)]}{\sqrt{\VV[\lambda_{d+1}(Z \cap W_s)]}} {\mathop{\longrightarrow}^{d}} \mathcal{N}(0,1).
	\end{equation*}
\end{theorem}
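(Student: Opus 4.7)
The plan is to apply the Malliavin--Stein method for Poisson functionals, specifically a second-order Poincar\'e-type bound in the style of Last, Peccati and Schulte, which controls the Wasserstein distance between the standardised volume and a standard Gaussian by moment integrals of first- and second-order add-one difference operators. View the construction as driven by a marked Poisson point process $\eta$ on $\RR^d \times S^d_+$ with intensity measure $\mu = \gamma\, \lambda_d \otimes \sigma$, where $\sigma$ is the law of the direction mark on the upper hemisphere $S^d_+$. Then $F_s := \lambda_{d+1}(Z \cap W_s) = \int_{W_s} \1\{x \in Z(\eta)\}\, dx$ has the clean add-one cost $D_{(p,v)} F_s = \lambda_{d+1}(C(p,v) \cap W_s \setminus Z(\eta))$, which is deterministically bounded by $\lambda_{d+1}(C(p,v)) = \kappa_d r^d T$, with $\kappa_d$ the volume of the unit $d$-ball. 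The key geometric fact is that the $(d+1)$-volume of a single cylinder is independent of its direction, because its cross-section at every fixed time is a $d$-ball of radius $r$; this keeps every moment finite despite the cylinders having unbounded spatial projection in near-horizontal regimes.

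Next I would establish the variance asymptotics $\VV[F_s] = \Theta(s^d)$. By Fubini,
\begin{equation*}
	\VV[F_s] = \int_{W_s \times W_s} \cov\bigl(\1\{x \in Z\}, \1\{y \in Z\}\bigr)\, dx\, dy,
\end{equation*}
and the void-probability formula gives $\cov(\1\{x \in Z\}, \1\{y \in Z\}) = e^{-2 \gamma \kappa_d r^d}\bigl(e^{\gamma m_2(x,y)} - 1\bigr)$, where $m_2(x,y) := \int_{S^d_+} \lambda_d(\{p : x, y \in C(p,v)\})\, \sigma(dv)$ is the typical measure of starting positions producing a single cylinder covering both $x$ and $y$. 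Swapping the order of integration yields $\int_{\RR^{d+1}} m_2(x,y)\, dy = (\kappa_d r^d)^2 T$ for any $x$ in the interior of the time slab, giving the upper bound $\VV[F_s] = O(s^d)$; the matching lower bound comes from strict positivity of $m_2(x,y)$ on the open set of pairs joinable by some cylinder axis.

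Plugging the deterministic moment bounds $|D_{(p,v)} F_s|^k \leq (\kappa_d r^d T)^{k-1}\, \lambda_{d+1}(C(p,v) \cap W_s)$ together with the identity $\int \lambda_{d+1}(C(p,v) \cap W_s)\, \mu(d(p,v)) = \gamma \kappa_d r^d \cdot s^d T$ into the second-order Poincar\'e inequality shows that each of the error integrals is of order $s^d$. Dividing by $\VV[F_s]^2 = \Theta(s^{2d})$ and extracting a square root gives a Wasserstein bound of order $s^{-d/2}$, which vanishes and proves the claimed convergence in distribution.

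I expect the main obstacle to be the variance lower bound. While the upper bound and the error integrals follow at once from the uniform cylinder volume, establishing $\VV[F_s] \geq c\, s^d$ with $c > 0$ requires a non-degeneracy argument: one must exhibit a set of pairs $(x,y) \in W_s^2$ of $(2d+2)$-measure at least of order $s^d$ on which the covariance is bounded uniformly away from zero. This must be done with care in the near-horizontal regime, where cylinder axes become long in the spatial direction and where boundary clipping by $W_s$ intervenes; the boundary contributions are of order $s^{d-1}$ and can be absorbed into the error, but organising the counting so that the axis-joinability condition survives the integration is the technical heart of the argument.
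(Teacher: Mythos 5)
Your proposal is correct and follows essentially the same route as the paper: Lemma \ref{Lemma_Apply_Malliavin_Stein_Bound} (the second-order Poincar\'e-type Malliavin--Stein bound of Theorem \ref{Theorem_LP18_Limit_Theorem}) with the difference operator localized by the bounded cylinder scope, a uniform bound on the add-one cost by the cylinder volume, and a variance lower bound of order $\lambda_{d+1}(W_s)$ obtained from the strict positivity of the covariance of coverage indicators for nearby pairs. The one step you leave as a sketch, the non-degeneracy of the variance, is carried out in the paper exactly as you anticipate: for $\|x-y\|\leq r$ the quantity you call $m_2(x,y)$ is bounded below uniformly in the direction, giving a constant $\tau>0$ on a set of pairs of measure $\Theta(s^d)$, which together with your (exact, direction-independent) cylinder volume $\kappa_d r^d T$ yields the rate $1/\sqrt{\lambda_{d+1}(W_s)}$ of Theorem \ref{Theorem_TBC_volume_limit_theorem}.
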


We prove this by applying results derived from the Malliavin-Stein method and see that the rate of convergence in the Wasserstein-distance is of order $1/\sqrt{\lambda_{d+1}(W_s)}$, see Theorem \ref{Theorem_TBC_volume_limit_theorem}. Theorem \ref{Theorem_TBC_Volume_Stacked} gives an analogous result in the setting allowing for changes in direction. The covered volume is an interesting quantity in network simulation as it gives some insight into the connectedness of the model. Less volume means more overlap of the communication zones. Therefore, since the TBC model considers the complete timeframe its covered volume actually gives some insight into the achievable throughput of the network.\\
The next property considered is the number of isolated nodes, which means cylinders that do not intersect with the rest of the model within the timeframe.

\begin{theorem}\label{Theorem_Intro_TBC_Iso}
Let $\mathrm{Iso_{Z}}(s)$ denote the number of cylinders with basepoint in $[-\frac{s}{2}, \frac{s}{2}]^{d}$ that do not intersect with any cylinder in the TBC model. Then
\begin{equation*}
	\frac{\mathrm{Iso_{Z}}(s) - \EE[\mathrm{Iso_{Z}}(s)]}{\sqrt{\VV[\mathrm{Iso_{Z}}(s)]}} {\mathop{\longrightarrow}^{d}} \mathcal{N}(0,1)
\end{equation*}
as $s \to \infty$.
\end{theorem}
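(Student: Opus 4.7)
The plan is to follow the same Malliavin--Stein strategy that worked for the volume in Theorem \ref{Theorem_Intro_TBC_Vol}, but with the Poisson functional
\begin{equation*}
F_s \;=\; \mathrm{Iso}_{Z}(s) \;=\; \sum_{\xi \in \eta}\mathbf{1}\{\pi(\xi) \in [-s/2,s/2]^d\}\,\mathbf{1}\{C(\xi) \cap C(\zeta)=\emptyset \text{ for all } \zeta\in\eta\setminus\{\xi\}\},
\end{equation*}
where $\eta$ is the underlying marked Poisson process, $\pi(\xi)$ denotes the basepoint and $C(\xi)$ the cylinder. The target is a Wasserstein bound of the form
$d_W\bigl((F_s-\EE F_s)/\sqrt{\VV F_s},\mathcal N(0,1)\bigr)\to 0$
via the second-order Poincaré-type inequality of Last, Peccati and Schulte, which controls the distance by $L^4$-norms of the first- and second-order add-one cost operators $D_\xi F_s$ and $D^2_{\xi_1,\xi_2}F_s$.

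First I would compute these difference operators explicitly. Adding one cylinder $\xi$ to a configuration $\eta$ changes the count by at most $1$ (if $\xi$ itself becomes an isolated cylinder counted by the indicator) minus the number of previously isolated cylinders in $W$ whose cylinder intersects $C(\xi)$. Hence
\begin{equation*}
|D_\xi F_s| \;\le\; 1 + N_{\mathrm{iso}}(\xi,\eta),
\end{equation*}
where $N_{\mathrm{iso}}(\xi,\eta)$ counts the cylinders of $\eta$ whose geometry meets $C(\xi)$. For the iterated operator $D^2_{\xi_1,\xi_2}F_s$ one observes it vanishes unless the three cylinders $C(\xi_1), C(\xi_2)$, and at least one common cylinder of $\eta$ form a chain of intersections, which localises the support. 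Next I would feed these bounds into the standard Malliavin--Stein estimates, much as done for the volume case, reducing the problem to integrals of moments of $N_{\mathrm{iso}}$ against the intensity measure on cylinder directions and basepoints.

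The key analytic input is controlling $\EE[N_{\mathrm{iso}}(\xi,\eta)^k]$ uniformly in $\xi$. Since the direction $v$ is uniform on the upper half sphere, the cylinder length $T/v_{d+1}$ has an unbounded but tame distribution, and Markov-type estimates together with the Mecke formula give polynomial moment bounds with constants independent of $s$. Combining this with the bounded-factor indicator $\mathbf{1}\{\pi(\xi)\in W\}$ shows that $\int \EE[(D_\xi F_s)^4]\,\mathrm d\Lambda(\xi)$ and the analogous product integrals involving $D^2$ are of order $\lambda_d([-s/2,s/2]^d)=s^d$.

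The main obstacle, as usual for isolation functionals, is the lower bound $\VV[F_s]\ge c\,s^d$ needed to divide by $\sqrt{\VV F_s}$ and obtain a nontrivial limit. I would establish it through the Poincaré lower bound
$\VV[F_s]\ge \int \EE[(D_\xi F_s)^2]\,\mathrm d\Lambda(\xi)$ restricted to "good" cylinders $\xi$: specifically, cylinders whose basepoint lies in the bulk of $W$ and whose direction and length fall in a fixed compact set. Conditioning on the event that no other cylinder of $\eta$ meets $C(\xi)$ (which has probability bounded away from zero by properties of the Poisson process and the finite Lebesgue measure of the enlarged cylinder) yields $\EE[(D_\xi F_s)^2]\ge c>0$ for all such $\xi$, and integrating over this bulk gives the required $s^d$ order. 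Once this lower bound is in place, plugging everything into the Malliavin--Stein bound yields a Wasserstein rate of order $s^{-d/2}$, which in particular implies the stated convergence in distribution. An entirely analogous argument, using the stacked construction of Section \ref{Section_Construction_of_Cylinders}, should give the corresponding statement for the model with changes in direction.
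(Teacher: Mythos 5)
Your overall framework matches the paper's: both apply a Malliavin--Stein / second-order Poincar\'e bound to the isolation functional, localize the first- and second-order add-one costs using the finite interaction range of the cylinders, and bound their moments by moments of a Poisson number of neighbouring basepoints. (One small misreading: in this model the directions live in $\MM=\{v\in S^d: v_{d+1}>h\}$, so the cylinder length is bounded by $R_h=T/h$ and no tail estimates for an ``unbounded'' length distribution are needed; the localization radius $R=2(R_h+r)$ is deterministic.)

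The genuine gap is in your variance lower bound. The inequality you invoke,
\begin{equation*}
\VV[F_s]\;\ge\;\int \EE\big[(\mathrm{D}_\xi F_s)^2\big]\,\Lambda(\mathrm{d}\xi),
\end{equation*}
is false: the Poincar\'e inequality for Poisson functionals runs in the \emph{opposite} direction (it gives an upper bound on the variance, exactly as used in Remark \ref{Remark_TBC_Upper_Bound_Variance}). The correct first-chaos lower bound is $\VV[F_s]\ge\int \big(\EE[\mathrm{D}_\xi F_s]\big)^2\,\Lambda(\mathrm{d}\xi)$, with the expectation inside the square, and for isolation counts this can degenerate: $\mathrm{D}_\xi F_s$ is the difference of a positive part (the new cylinder may itself be isolated) and a negative part (it may destroy the isolation of nearby cylinders), and by the Mecke formula these two contributions cancel in expectation at a critical intensity $\gamma$, so $\EE[\mathrm{D}_\xi F_s]$ need not be bounded away from zero. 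Your subsequent conditioning step does not repair this, because conditioning on $\Cyl(\xi)$ being isolated controls only the positive part of $\mathrm{D}_\xi F_s$, not the cancellation in the unconditional mean. The paper circumvents this entirely with a Reitzner-type argument: tile $[-\frac{s}{2},\frac{s}{2}]^d$ by $\Theta(s^d)$ boxes $S_i$ of side $6(R_h+r)$, condition on the $\sigma$-field $\mathcal F$ revealing everything except the two points in the central subbox $C_i$ on the event $A_i=\{\eta(C_i)=2,\ \eta(S_i\setminus C_i)=0\}$, and use $\VV[F_s]\ge\EE[\VV[F_s\mid\mathcal F]]$; on each $A_i$ the conditional variance equals $4\,\PP[\Cyl(X_{i,1})\cap\Cyl(X_{i,2})=\emptyset]\cdot\PP[\Cyl(X_{i,1})\cap\Cyl(X_{i,2})\neq\emptyset]>0$, which summed over the boxes yields the required lower bound of order $s^d=\lambda_{d+1}(W_s)/T$. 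You would need this (or some equivalent device) to complete your argument for all admissible parameters.
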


As before we additionally offer a rate of convergence (see Theorem \ref{Theorem_TBC_isolated_nodes_limit_theorem}) and a version for changeable directions (see Theorem \ref{Theorem_TBC_Iso_Stacked}). These nodes are of particular interest in opportunistic networks as one can not communicate with them. Let us assume a sensor node (e.g. deployed for wildlife monitoring \cite{Anthony2012}) that can store the sensor data for some time, but has to transmit the data before it runs out of memory. If this node stays isolated for too long, it results in loss of data (e.g. inaccuracies in the wildlife tracking trace).
Lastly we come to a quantity of more theoretical significance and present a limit theorem for the Euler characteristic of the model, a precise definition of which is given in Section \ref{Section_Main_results_and_proofs_Cylinders}.
\begin{theorem}\label{Theorem_Intro_TBC_Euler}
 	For $s \to \infty$ the Euler characteristic $\chi$ of the TBC model $Z$ restricted to a window $W_s=[-\frac{s}{2}, \frac{s}{2}]^{d}$ satisfies a central limit theorem, namely
	\begin{equation*}
		\frac{\chi(Z \cap W_s) - \EE[\chi(Z \cap W_s)]}{\sqrt{\VV[\chi(Z \cap W_s)]}} {\mathop{\longrightarrow}^{d}} \mathcal{N}(0,1).
	\end{equation*}
\end{theorem}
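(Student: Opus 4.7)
The proof follows the Malliavin--Stein strategy already employed for Theorems \ref{Theorem_Intro_TBC_Vol} and \ref{Theorem_Intro_TBC_Iso}; the extra difficulty lies in finding a workable representation of $\chi(Z \cap W_s)$ as a Poisson functional. Each cylinder $C_i$ is a spherocylinder and therefore convex, so any finite intersection $C_{i_1} \cap \cdots \cap C_{i_k} \cap W_s$ is convex and has Euler characteristic $1$ when nonempty and $0$ otherwise. Additivity of $\chi$ on the convex ring then yields the almost-surely finite representation
\begin{equation*}
\chi(Z \cap W_s) \;=\; \sum_{k \geq 1} (-1)^{k-1} N_k(s), \qquad N_k(s) \;:=\; \#\bigl\{\{i_1, \ldots, i_k\} : C_{i_1} \cap \cdots \cap C_{i_k} \cap W_s \neq \emptyset \bigr\},
\end{equation*}
which expresses $\chi(Z \cap W_s)$ as an alternating sum of Poisson $U$-statistics on the marked cylinder process.

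Rather than treating each $N_k(s)$ separately, I would apply the Malliavin--Stein bound of Last, Peccati and Schulte directly to $F = \chi(Z \cap W_s)$. Writing $C_x$ for the cylinder associated to a parameter $x$ in the mark space, the first-order add-one-cost operator reads
\begin{equation*}
D_x F \;=\; \chi\!\bigl((Z \cup C_x) \cap W_s\bigr) - \chi(Z \cap W_s) \;=\; \sum_{k \geq 0} (-1)^{k} \, M_k(Z;x),
\end{equation*}
where $M_k(Z;x)$ counts $k$-tuples of cylinders in $Z$ whose joint intersection with $C_x \cap W_s$ is nonempty. Because the cross-section radius $r$ is fixed, $M_k(Z;x)$ vanishes as soon as $k$ exceeds the (Poisson-distributed) number of cylinders meeting a bounded neighbourhood of $C_x$, so $|D_x F|$ and $|D_{x,y}^2 F|$ admit moment bounds of every order by standard Poisson tail estimates. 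Together with the locality of $D_x F$ in $x$, these bounds reduce the four integrals appearing in the Malliavin--Stein inequality to quantities of order $s^d$.

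The main obstacle will be the matching lower variance bound. In contrast with the covered volume or the number of isolated cylinders, $\chi$ has a signed representation and one cannot exclude a priori that sign cancellations between the $N_k(s)$ collapse $\VV[\chi(Z \cap W_s)]$. I would handle this in the spirit of Hug, Last and Schulte's treatment of intrinsic volumes of classical Boolean models: tile $W_s$ with unit-size subwindows and, conditionally on the configuration outside a fixed subwindow, exhibit two admissible configurations inside it (for instance, a single short, well-separated cylinder versus the empty configuration) whose conditional contributions to $\chi$ differ by a deterministic positive amount. Spatial independence of the Poisson process then upgrades this to $\VV[\chi(Z \cap W_s)] \gtrsim s^d$, and combined with the upper bound from the previous paragraph it yields the stated central limit theorem with Wasserstein rate of order $s^{-d/2}$.
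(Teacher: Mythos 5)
Your proposal follows essentially the same route as the paper: both apply the Malliavin--Stein bound of Lemma \ref{Lemma_Apply_Malliavin_Stein_Bound} directly to $\chi(Z\cap W_s)$, exploit that the difference operators vanish (resp.\ are bounded by the Poisson number of cylinders) outside the interaction range $2(R_h+r)$, and obtain the matching lower variance bound of order $s^d$ by tiling the window and conditioning on local configurations whose contributions to $\chi$ differ. The only cosmetic differences are that the paper bounds $\mathrm{D}_x\chi$ via additivity of $\chi$ rather than your inclusion--exclusion expansion, and its local configurations are two points in a central box surrounded by an empty buffer of side $6(R_h+r)$ (contribution $1$ or $2$ according to whether the two cylinders intersect) rather than your one-cylinder-versus-empty comparison, which would need a buffer of that same size rather than unit size to guarantee separation.
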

Since the Euler characteristic can be defined as the alternating sum of Betti numbers, we see this result as an incentive for further mathematical studies of the Betti numbers of the TBC model. These would give great insight into the topological structure of the model, such as the number of connected components or the number of holes. From a computer science perspective, both of these would be of special interest.

\section{Construction of the Cylinders}\label{Section_Construction_of_Cylinders}

We consider a stationary Poisson process $\eta$ in $\RR^d$ with intensity $\gamma \in (0, \infty)$. We will refer to its points as the \textbf{basepoints} of the cylinders we are about to construct. In terms of random networks think of these points as the positions of our nodes at time $0$. 
Next, let $h \in (0,1)$ denote a real constant and define

\begin{equation*}
	\MM := \{ v \in S^d : v_{d+1} > h\}
\end{equation*}
and let $\QQ$ denote some probability measure on $\MM$. We shall refer to vectors chosen according to this measure as the \textbf{directions} of our cylinders. $\QQ$ may be discrete or continuous, we do not impose any restrictions on it other than not being allowed to depend on the position of the basepoints. As long as the direction is chosen independently and identically for all basepoints the arguments made in this paper will hold.

\begin{figure}
	\begin{center}
		\includegraphics[scale=0.12]{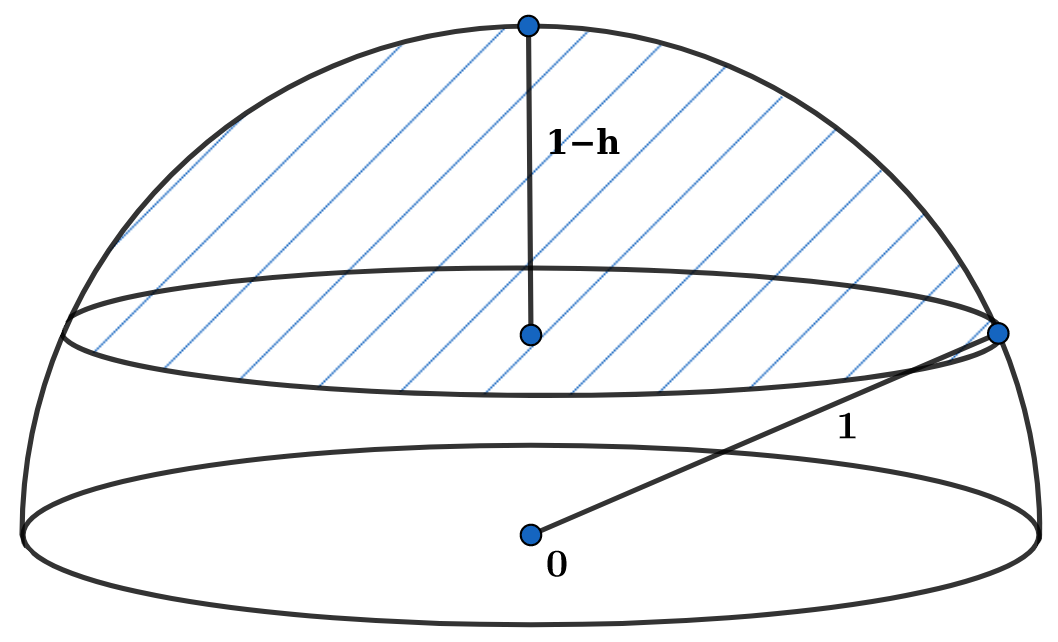}
	\end{center}
	\caption{The blue shaded area is the set of possible directions when $d=2$.}
\end{figure}

To construct a cylinder fix some $T \in [0,\infty)$. We call this parameter the \textit{time horizon} of the model. For $(x_1,\dots,x_d) \in \RR^d$ and $u \in \RR$ we set $\hat{x}_u = (x_1,\dots,x_d,u) \in \RR^{d+1}$. Given a basepoint $p \in \RR^d$ and a direction $v \in \MM$, we define a time bounded cylinder as a union of Minkowski sums. $\mathcal{C}^{d}$ shall denote the system of compact subsets of $\RR^{d}$.
\begin{definition}\label{Definition_Cylinder}
	Given a radius $r \geq 0$ and a time horizon $T$, the value of the function
	\begin{align*}
		\mathrm{Cyl}_{r,T}:~\RR^d \times \MM &\to \mathcal{C}^{d+1}\\
		(p,v) &\mapsto \bigcup_{t \in [0,1]} \left(\Big(\frac{tT}{v_{d+1}} \cdot v + \hat{p}_0\Big) + \mathbb{B}_d(r)\right)
	\end{align*}
	is called a \textbf{time bounded cylinder}.
\end{definition}

The first $d$ entries of a vector $v \in \MM$ thus encode the direction a node is moving in. The entry with index $d+1$ encodes the velocity at which that node is traveling. We can now see that $v_{d+1}=1$ means a node remains in place, while smaller values for $v_{d+1}$ imply faster movement. $v_{d+1}=0$ would imply infinite velocity which, in turn, would imply cylinders of infinite length. Thus we need the restriction $v_{d+1} > h$ to limit the movement speed of nodes.\\
Since $r$ and $T$ are constant in our model, we define $\mathrm{Cyl}(p,v) := \mathrm{Cyl}_{r,T}(p,v)$ for ease of notation. It will prove fruitful to think of the cylinders themselves as elements of a Poisson point process. To that end, we take the points of $\eta$ and attach the directions as markings, cf. \cite[Definition 5.3]{Last2018}.
\begin{definition}\label{Definition_Process_of_Cylinders}
	Let $\eta$ and $\QQ$ be defined as above. Mark every point in the support of $\eta$  with a direction from $\MM$, randomly chosen i.i.d. according to $\QQ$. The resulting marked point process $\mathbf{\xi}$, defined on $\RR^d \times \MM$, is called the \textbf{process of time bounded cylinders}.
\end{definition}

\begin{lemma}\label{Lemma_Xi_is_a_Poisson_Point_Process}
	$\xi$ is a Poisson point process with intensity measure $\Lambda= \gamma \lambda_d \otimes \QQ$.
\end{lemma}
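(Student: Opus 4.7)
The statement is a direct application of the marking theorem for Poisson processes. The plan is to identify $\xi$ as an independent $\mathbb{Q}$-marking of $\eta$ and then invoke a standard result from the theory of marked Poisson point processes.

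First, I would recall the setup: $\eta$ is a Poisson point process on $\mathbb{R}^d$ with intensity measure $\gamma \lambda_d$, and Definition \ref{Definition_Process_of_Cylinders} attaches to each point of $\eta$ an i.i.d. mark drawn from $\mathbb{Q}$ on $\mathbb{M}_h^d$, with the marks being independent of $\eta$ and of each other. This is precisely the construction of an independently marked Poisson process in the sense of \cite[Definition 5.3]{Last2018} with constant mark kernel $K(x, \cdot) = \mathbb{Q}(\cdot)$.

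Second, I would apply the marking theorem (e.g., \cite[Theorem 5.6]{Last2018}), which states that an independently marked Poisson process on $\mathbb{R}^d \times \mathbb{M}_h^d$ with intensity measure $\gamma \lambda_d$ and mark kernel $K$ is itself a Poisson point process on $\mathbb{R}^d \times \mathbb{M}_h^d$ with intensity measure
\begin{equation*}
\Lambda(B \times A) = \int_B K(x, A) \, \gamma \, \mathrm{d}\lambda_d(x)
\end{equation*}
for measurable $B \subseteq \mathbb{R}^d$ and $A \subseteq \mathbb{M}_h^d$. Specializing this to the constant kernel $K(x, \cdot) = \mathbb{Q}$ collapses the integral to $\gamma \lambda_d(B) \cdot \mathbb{Q}(A)$, which by uniqueness of product measures on the generating rectangles extends to $\Lambda = \gamma \lambda_d \otimes \mathbb{Q}$ on the full product $\sigma$-algebra.

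There is no real obstacle here; the only thing to verify is that the construction in Definition \ref{Definition_Process_of_Cylinders} genuinely fits the hypotheses of the marking theorem, namely that the direction marks are drawn independently across basepoints and independently of the locations of $\eta$. This is built into the definition, so the lemma follows immediately.
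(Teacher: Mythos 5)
Your proposal is correct and follows exactly the paper's own argument: identify $\xi$ as an independent $\QQ$-marking in the sense of \cite[Definition 5.3]{Last2018} and invoke the Marking Theorem \cite[Theorem 5.6]{Last2018}. The only difference is that you spell out the computation of the intensity measure for the constant mark kernel, which the paper leaves implicit.
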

\begin{proof}
	By Definition \cite[Definition 5.3]{Last2018}, $\xi$ is an independent $\QQ$-marking. The asserted statement now follows directly from the Marking Theorem \cite[Theorem 5.6]{Last2018}.
\end{proof}

We are now in a position to define our model:

\begin{definition}\label{Definition_TBC_model}
	With the point process $\xi$ given as above, the \textbf{time bounded cylinder (TBC) model} is the random set
	\begin{equation*}
		Z(\xi) := \bigcup_{(p,v) \in \xi} \mathrm{Cyl}(p,v).
	\end{equation*}
	In this notation we identify the random measure $\xi$ with its support.
\end{definition}
\begin{figure}
	\begin{center}
		\includegraphics[scale=0.6255]{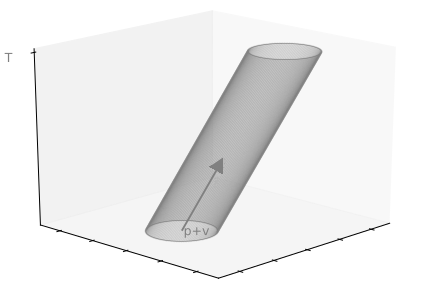}\includegraphics[scale=0.5]{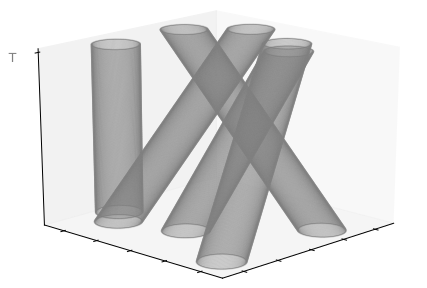}
	\end{center}
	\caption{In the left picture we see a cylinder with basepoint $p$ and direction $v$. The arrow marks the displaced vector $\hat{p}_0+v$. To the right we have an excerpt of a TBC model $Z$ constructed on $\RR^2$.}
\end{figure}

This model is a modification of the Poisson cylinder model (see e.g. \cite{Weil1987}) where a cylinder in $\RR^d$ is constructed by choosing a $q$-dimensional, $0 \leq q \leq d-1$, linear subspace of $\RR^d$ and taking the Minkowksi sum of this subspace and a set taken from its orthogonal complement. A Poisson cylinder process is then simply a Poisson process on the space of these cylinders. The union $Z$ of the cylinders created by such a process in the stationary case has been subject to recent study. In \cite{Hein2009} Heinrich and Spiess derive a central limit theorem for the $d$-dimensional volume of $Z$ restricted to a growing window of observation by taking into account the long-range dependencies and applying the method of cumulants.
In \cite{Hein2013} the authors modified that limit theorem using weakened assumptions on the cylinder bases and proved a new central limit theorem for the surface content of the model. In \cite{Baci2019} Baci et al. were able to derive concentration inequalities for the volume and the intrinsic volumes of the model, again restricted to a compact window of observation. Assumptions of convexity on the cylinder bases and isotropy of $Z$ were made. In the very recent preprint \cite{Betken2021} Betken et al. show central limit theorems for geometric funtionals for Poisson cylinder processes. The special case of $q=1$ and spherical cylinder bases is studied in \cite{Broman2016}, where  Broman and Tykesson focus on connectivity properties. In the preprint \cite{Flimmel2021}, Flimmel and Heinrich also followed the idea of cylinders constructed by marked point processes, but they restrict the underlying point process to lie on the real line. They determine the asymptotic limit for the variance of the covered area for stripes of random thickness and prove a law of large numbers.\\
The classical Poisson cylinder model has two properties that do not fit to the telecommunication network: The cylinders at a fixed time step would imply ellipses as a communication zone depending on individual speed and direction -- smaller angles (representing high speed) imply higher eccentricity. In a telecommunication network however, the spherical communication area is preserved at any time step independently of the movement. Secondly almost horizontally running cylinders as in the classical Poisson cylinder model would mean that one travels with almost infinite speed. The time bounded cylinder model takes up these two aspects.
Since for fast movements the communication zone resembles flat tubes rather than cylinders which has direct influence on the covered volume of their intersection. On the other hand the cylinders which cross almost horizontally in the classical Poisson cylinder model have an impact on the connection of the cylinders because they connect many other cylinders at once. This is why we consider it interesting to study the asymptotics of the modified time bounded cylinder model.

\section{Preliminaries}

A helpful tool in the study of random closed sets in general and in particular when proving the main results presented in this chapter are capacity functionals. These can be seen as analogues to the distribution functions of real-valued random variables for random closed sets, see \cite{Schneider2008} for reference.
\begin{definition}\label{Definition_Capacity_Functional}
	The \textbf{capacity functional} of a random closed set $Z \subset \RR^d$ is a mapping defined by the probability
	\begin{equation*}
		\PP(Z \cap C \neq \emptyset)
	\end{equation*}
	with argument $C \in \mathcal{C}^d$.
\end{definition}

To derive the capacity functional of our model we will need the following definition.

\begin{definition}\label{Definition_TBC_v_Shadow}
	Given a set $C \in \RR^d \times [0,T]$ and a direction $v \in \MM$ we denote by $\overline{C}_v$ the $\mathbf{v}$\textbf{-shadow} of $C$ onto $\RR^d$, that is the set
	\begin{equation*}
		\overline{C}_v := \Big\{ x - \frac{x_{d+1}}{v_{d+1}}v ~|~ x \in C\Big\} \subset \RR^d \times \{0\}.
	\end{equation*}
\end{definition}
Intuitively speaking, the $v$-shadow of a point $x \in \RR^{d+1}$ is the point of intersection of the line $\{x+s\cdot v ~|~ s \in \RR\}$ and $\RR^d \times \{0\}$. That is the projection of $x$ on $\RR^d \times \{0\}$ along $v$. With this we can formulate our next lemma which gives a concise form of the capacity functional for the TBC model.

\begin{lemma}\label{Lemma_TBC_Capacity Functional}
	For $Z(\xi)$ as in Definition \ref{Definition_TBC_model} and a compact $C \subset \RR^d \times [0,T]$ we have
	\begin{equation*}
		\PP(Z(\xi) \cap C \neq \emptyset) = 1 - \exp\Big(-\gamma \cdot \int_{\MM} \lambda_d\big(\overline{C}_v + \mathbb{B}_d(r)\big) ~\QQ(\mathrm{d}v)\Big).
	\end{equation*}
\end{lemma}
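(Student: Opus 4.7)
The plan is to reduce the event $\{Z(\xi) \cap C \neq \emptyset\}$ to a void event for the Poisson process $\xi$ on $\RR^d \times \MM$, then apply the standard void probability formula.

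First I would establish the geometric reformulation: for any fixed $(p,v) \in \RR^d \times \MM$,
\begin{equation*}
\mathrm{Cyl}(p,v) \cap C \neq \emptyset \;\Longleftrightarrow\; p \in \overline{C}_v + \mathbb{B}_d(r),
\end{equation*}
where $\RR^d$ is identified with $\RR^d \times \{0\}$. This is where most of the content sits. A point $y \in \mathrm{Cyl}(p,v)$ has the form $y = \hat{p}_0 + \tfrac{tT}{v_{d+1}}v + (b,0)$ for some $t \in [0,1]$ and $b \in \mathbb{B}_d(r)$, so its last coordinate is $tT$ and its first $d$ coordinates are $p + \tfrac{tT}{v_{d+1}}(v_1,\dots,v_d) + b$. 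Hence $y \in C$ forces $t = y_{d+1}/T \in [0,1]$, and subtracting $\tfrac{y_{d+1}}{v_{d+1}}v$ from $y$ then dropping the last coordinate gives exactly an element of $\overline{C}_v$ that differs from $p$ by an element of $\mathbb{B}_d(r)$. Conversely, for $p \in \overline{C}_v + \mathbb{B}_d(r)$ we can reverse the construction to produce a point in $\mathrm{Cyl}(p,v) \cap C$.

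Next I would define the measurable set
\begin{equation*}
A_C := \{(p,v) \in \RR^d \times \MM : \mathrm{Cyl}(p,v) \cap C \neq \emptyset\} = \{(p,v) : p \in \overline{C}_v + \mathbb{B}_d(r)\}
\end{equation*}
and observe that $\{Z(\xi)\cap C = \emptyset\} = \{\xi(A_C) = 0\}$. Because $\xi$ is a Poisson point process with intensity measure $\Lambda = \gamma\,\lambda_d \otimes \QQ$ by Lemma \ref{Lemma_Xi_is_a_Poisson_Point_Process}, the void probability formula yields
\begin{equation*}
\PP(\xi(A_C) = 0) = \exp(-\Lambda(A_C)).
\end{equation*}
Fubini applied to $A_C$ gives
\begin{equation*}
\Lambda(A_C) = \gamma \int_{\MM} \lambda_d\big(\overline{C}_v + \mathbb{B}_d(r)\big)\,\QQ(\mathrm{d}v),
\end{equation*}
and taking the complementary probability produces the claimed expression.

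The only delicate point I foresee is the measurability of $(p,v) \mapsto \mathbbm{1}\{p \in \overline{C}_v + \mathbb{B}_d(r)\}$ (equivalently, that $A_C$ is measurable and $v \mapsto \lambda_d(\overline{C}_v + \mathbb{B}_d(r))$ is measurable). This follows from the fact that $C$ is compact, so $\overline{C}_v$ depends continuously on $v$ in the Hausdorff metric, and Minkowski addition with a fixed compact ball preserves this; thus $\lambda_d(\overline{C}_v + \mathbb{B}_d(r))$ is a lower semicontinuous (in fact continuous) function of $v$. Everything else is a routine application of the void-probability characterisation of Poisson point processes.
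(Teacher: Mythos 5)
Your proposal is correct and follows essentially the same route as the paper: both reduce the event to the void probability $\PP(\xi(A_C)=0)$ for the set $A_C=\{(p,v): \mathrm{Cyl}(p,v)\cap C\neq\emptyset\}$, identify the $p$-section of $A_C$ with $\overline{C}_v+\mathbb{B}_d(r)$, and compute $\Lambda(A_C)$ by Fubini. Your explicit coordinate verification of the equivalence $\mathrm{Cyl}(p,v)\cap C\neq\emptyset \Leftrightarrow p\in\overline{C}_v+\mathbb{B}_d(r)$ and the measurability remark are just more detailed versions of steps the paper states briefly.
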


\begin{proof}
	Define the set
	\begin{equation*}
		A_C := \{(p,v) \in \RR^d \times \MM ~|~ \mathrm{Cyl}(p,v) \cap C \neq \emptyset \}.
	\end{equation*}
	By Lemma \ref{Lemma_Xi_is_a_Poisson_Point_Process} $\xi$ is a Poisson point process. Thus we have that
	\begin{equation*}
		\PP(Z(\xi) \cap C \neq \emptyset) = 1 - \PP(\xi(A_C) = 0) = 1 - \exp(-\Lambda(A_C)).
	\end{equation*}
	Note that for any fixed $v \in \MM$ and $x \in \RR^d \times [0,T]$ there exists a ball of radius $r$ in $\RR^d$ such that a cylinder with direction $v$ will include $x$ if and only if its basepoint is included in that ball. The union of these balls for all points $x \in C$ coincides with the set $\overline{C}_v + \mathbb{B}_d(r)$. We get
	\begin{align*}
		&\Lambda(A_C)\\ & = \gamma \int_{\MM} \int_{\RR^d} \mathbbm{1}(\mathrm{Cyl}(p,v) \cap C \neq \emptyset) ~\lambda_d(\mathrm{d}p) \QQ(\mathrm{d}v) \\
		&= \gamma \int_{\MM} \int_{\overline{C}_v + \mathbb{B}_d(r)} 1 ~\lambda_d(\mathrm{d}p) \QQ(\mathrm{d}v) = \gamma \int_{\MM} \lambda_d\big(\overline{C}_v + \mathbb{B}_d(r)\big) ~\QQ(\mathrm{d}v).
	\end{align*}
\end{proof}

\noindent If one wants to express the probability that a particular point $x \in \RR^d \times [0,T]$ gets hit by the set of cylinders, then Lemma \ref{Lemma_TBC_Capacity Functional} with $C := \{x\}$ yields
\begin{equation}\label{Equation_Probability_x_hits_TBC}
	\PP(x \in Z(\xi)) = 1 - \exp(-\gamma \cdot r^d \kappa_d)
\end{equation}
where $\kappa_d$ denotes the volume of the $d$-dimensional unit ball. To give bounds on the capacity functional we sometimes need the following definition.
\begin{definition}\label{Defininition_Cylinder_Scope}
	Given a pair $(p,v) \in \RR^d \times \MM$, we call the distance from basepoint $(p_1,\dots,p_d,0)$ to
	endpoint $\frac{T}{v_{d+1}}(v_1,\dots,v_{d+1}) + (p_1,\dots,p_d,0)$,
	\begin{equation*}
		R_{v} := \biggl\|\frac{T}{v_{d+1}}(v_1,\dots,v_{d+1})\biggl\| = \frac{T}{v_{d+1}},
	\end{equation*}
	the \textbf{scope} of the corresponding cylinder $\mathrm{Cyl}(p,v)$. We denote the maximum scope of cylinders in our model by $R_h := \frac{T}{h}$.
\end{definition}
Think of the scope of a cylinder as the radius of its $d$-dimensional area of influence. For $(p,v),(q,w) \in \RR^d \times \MM$ the cylinders $\mathrm{Cyl}(p,v)$ and $\mathrm{Cyl}(q,w)$ can intersect if and only if $\|p-q\| \leq R_v + R_w + 2r \leq 2(R_h+r)$. Now we want to estimate $\PP(\mathrm{Cyl}(p,v) \cap Z(\xi) \neq \emptyset)$. Notice that
$\lambda_d\big(\overline{\mathrm{Cyl}(p,v)_w} + \mathbb{B}_d(r)\big)$ takes its minimum in the case $w = v$. Then we have that $\overline{\mathrm{Cyl}(p,v)}_w = \mathbb{B}_d(p,r)$. The maximum is reached when $v$ and $w$ have opposite directions while $w_{d+1} = h$. Then it follows that
\begin{align*}
	\lambda_d\big(\overline{\mathrm{Cyl}(0,v)_{w}} + \mathbb{B}_d(r)\big) &\leq 2(R_{h}+r) \cdot \lambda_{d}\big(\mathbb{B}_d(2r)\big) < \infty
\end{align*}
for all $v \in \MM$. We get
\begin{equation*}
	0 < (2r)^d \kappa_d \leq \lambda_d\big(\overline{\mathrm{Cyl}(p,v)_w} + \mathbb{B}_d(r)\big) \leq (R_{h}+r)2^{d+1}r^d \kappa_d
\end{equation*}
and thus by Lemma \ref{Lemma_TBC_Capacity Functional}
\begin{equation}\label{Equation_Probability_Cylinder_hits_TBC}
	1 - \exp(-\gamma (2r)^d \kappa_d) \leq \PP(\mathrm{Cyl}(p,v) \cap Z(\xi) \neq \emptyset) \leq 1 - \exp(-\gamma (R_{h}+r)2^{d+1}r^d\kappa_d).
\end{equation}

\section{Applying the Malliavin-Stein Bound}\label{Section_Applying_the_Malliavin_Stein_Bound}

We follow the general definition in \cite[Chapter 18]{Last2018} and consider a measurable space $(\mathbb{X}, \mathcal{X})$ and denote by $\mathbf{N}$ the space of all locally finite counting measures on $\mathbb{X}$. Let $\eta$ denote a Poisson process on $\mathbb{X}$ with $\sigma$-finite intensity measure $\Lambda$.

\begin{definition}\label{Definition_Poisson_Functional}
		A random variable $F$ such that $F = f(\eta)$ $\PP$-a.s. for some measurable function $f: \mathbf{N} \to \RR$ is called a \textbf{Poisson functional}. In this notion $f$ is called a representative of $F$.
\end{definition}

Our aim in this paper is to derive asymptotic distributions of Poisson functionals based on $\xi$. This involves the following operator which can be seen as a discrete analogue to classical derivatives.

\begin{definition}\label{Definition_Difference_Operator}
	For $x \in \mathbb{X}$ we define a map $\mathrm{D}_xf: \mathbf{N} \to \RR$ by
	\begin{equation*}
		\mathrm{D}_xf := f(\mu + \delta_x) - f(\mu), \quad \mu \in \mathbf{N}.
	\end{equation*}
	This map is called the \textbf{difference} (or \textbf{add-one-cost}) \textbf{operator} of $f(\mu)$. We can extend this definition inductively for $n \geq 2$ and $(x_1, \dots, x_n) \in \mathbb{X}^n$ by setting $\mathrm{D}^1 := \mathrm{D}$ and defining $\mathrm{D}_{x_1, \dots, x_n}^nf : \mathbf{N} \to \RR$ by
	\begin{equation*}
		\mathrm{D}_{x_1,\dots,x_n}^nf := \mathrm{D}_{x_1}^1 \mathrm{D}_{x_2,\dots,x_n}^{n-1}f.
	\end{equation*}
	We then call $\mathrm{D}_{x_1,\dots,x_n}^nf$ the $\mathbf{n}$-\textbf{th order difference operator}.
\end{definition}

The following well-known theorem is derived from the Malliavian-Stein method and uses difference operators to give a bound on the Wasserstein distance $\mathrm{d_W}$ of the law of a Poisson functional and the standard normal distribution.

\begin{theorem}[{\cite[Theorem 21.3]{Last2018}}]\label{Theorem_LP18_Limit_Theorem}
	Let $\eta$ denote a Poisson process with intensity measure $\Lambda$. Consider a Poisson functional $F$ on $\eta$ with $\EE[F] = 0$ and $\VV[F]=1$. $N$ shall denote a standard normal random variable. If $\EE\big[\int (\mathrm{D}_x F)^2 \Lambda(\mathrm{d}x) \big] < \infty$ then there are constants
	\begin{align*}
		\alpha_1(F) &= \int \sqrt{\EE[(\mathrm{D}_xF)^2 (\mathrm{D}_yF)^2]} \cdot \sqrt{\EE[(D_{x,z}^2F)^2 (D_{y,z}^2F)^2]} ~\Lambda^3(\mathrm{d}(x,y,z)),\\
		\alpha_2(F) &= \int \EE[(\mathrm{D}_{x,z}^2F)^2(\mathrm{D}_{y,z}^2F)^2]~\Lambda^3(\mathrm{d}(x,y,z)),\\
		\alpha_3(F) &= \int \EE[|\mathrm{D}_xF|^3] ~\Lambda(\mathrm{d}x),
	\end{align*}
	such that
	\begin{equation}
		\mathrm{d_W}(F,N) \leq 2 \sqrt{\alpha_1(F)} + \sqrt{\alpha_2(F)} + \alpha_3(F).
	\end{equation}
\end{theorem}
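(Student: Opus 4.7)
The plan is to proceed via the Malliavin--Stein method, combining Stein's characterisation of the normal distribution with the Poisson integration-by-parts formula. The first step is to reduce the Wasserstein distance to a Stein-type expectation: for every $1$-Lipschitz $h$ the Stein equation $g'(x) - x g(x) = h(x) - \EE h(N)$ admits a solution $g_h$ with $\|g_h'\|_\infty \leq \sqrt{2/\pi}$ and $\|g_h''\|_\infty \leq 2$, so that
\begin{equation*}
\mathrm{d_W}(F, N) \leq \sup_g \bigl|\EE[g'(F) - F g(F)]\bigr|,
\end{equation*}
the supremum being over $g$ subject to those two bounds.

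The second step exploits the Malliavin structure on Poisson space. Writing $F = -L L^{-1} F$ for the Ornstein--Uhlenbeck generator $L$ acting diagonally on the chaos decomposition of $F$, and using the Skorokhod adjointness $\EE[\langle u, \mathrm{D}G\rangle_{L^2(\Lambda)}] = \EE[G\, \delta u]$ together with the identity $\delta \mathrm{D}(-L^{-1}) = \mathrm{Id}$ on centred functionals, one obtains
\begin{equation*}
\EE[F g(F)] = \EE\Big[\int \mathrm{D}_x g(F) \cdot \bigl(-\mathrm{D}_x L^{-1} F\bigr)\, \Lambda(\mathrm{d}x)\Big].
\end{equation*}
The product rule $\mathrm{D}_x g(F) = g(F + \mathrm{D}_x F) - g(F)$ and a first-order Taylor expansion give $\mathrm{D}_x g(F) = g'(F)\, \mathrm{D}_x F + R_x$ with $|R_x| \leq \tfrac{1}{2}\|g''\|_\infty (\mathrm{D}_x F)^2$, yielding
\begin{align*}
\EE[g'(F) - F g(F)] = {} &\EE\Big[g'(F)\Big(1 - \int \mathrm{D}_x F \cdot (-\mathrm{D}_x L^{-1}F)\,\Lambda(\mathrm{d}x)\Big)\Big]\\
&- \EE\Big[\int R_x \cdot (-\mathrm{D}_x L^{-1}F)\,\Lambda(\mathrm{d}x)\Big].
\end{align*}

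Each of the three summands $\alpha_i$ will control one piece of this. Since $\EE[\int \mathrm{D}_x F \cdot (-\mathrm{D}_x L^{-1}F)\,\Lambda(\mathrm{d}x)] = \EE[F^2] = 1$, the first term is bounded by $\|g'\|_\infty$ times the standard deviation of $\langle \mathrm{D}F, -\mathrm{D}L^{-1}F\rangle_{L^2(\Lambda)}$. To bound this standard deviation, apply the Poisson Poincar\'e inequality $\VV[G] \leq \EE[\int (\mathrm{D}_z G)^2\,\Lambda(\mathrm{d}z)]$ to $G = \langle \mathrm{D}F, -\mathrm{D}L^{-1}F\rangle$ and expand using the Leibniz rule for the difference operator,
\begin{equation*}
\mathrm{D}_z(uv) = u\,\mathrm{D}_z v + v\,\mathrm{D}_z u + \mathrm{D}_z u \cdot \mathrm{D}_z v,
\end{equation*}
which introduces second-order differences $\mathrm{D}^2 F$ and $\mathrm{D}^2 L^{-1}F$. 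The two mixed "cross" terms, after Cauchy--Schwarz and the contraction $|\mathrm{D}_x L^{-1}F| \leq |\mathrm{D}_x F|$ (valid because $-L^{-1}$ acts as $1/n$ on the $n$-th chaos), each produce a copy of $\sqrt{\alpha_1(F)}$, summing to $2\sqrt{\alpha_1(F)}$; the pure $\mathrm{D}_z u \cdot \mathrm{D}_z v$ term produces $\sqrt{\alpha_2(F)}$. The same contraction handles the Taylor remainder: $\EE[\int |R_x|\cdot|\mathrm{D}_x L^{-1}F|\,\Lambda(\mathrm{d}x)] \leq \tfrac{1}{2}\|g''\|_\infty \cdot \alpha_3(F)$, and combining with $\|g'\|_\infty \leq \sqrt{2/\pi}\leq 1$ and $\|g''\|_\infty \leq 2$ yields the asserted bound.

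The main technical obstacle is the Poincar\'e-type control on the variance of $\langle \mathrm{D}F, -\mathrm{D}L^{-1}F\rangle$: this is precisely what forces the second-order difference operator and the triple $\Lambda^3$-integrals in $\alpha_1$ and $\alpha_2$ to appear, and obtaining the contraction $|\mathrm{D}_x L^{-1}F| \leq |\mathrm{D}_x F|$ inside that second-order piece, while keeping careful track of symmetrisations across the chaos expansion so that the final bound depends only on evaluations of $\mathrm{D}$ and $\mathrm{D}^2$ on $F$, is the delicate bookkeeping at the heart of the Last--Penrose argument.
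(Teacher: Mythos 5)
The paper offers no proof of this statement: it is imported verbatim as Theorem 21.3 of Last and Penrose (going back to Theorem 1.1 of Last, Peccati and Schulte), so there is no in-paper argument to compare against. Your sketch reproduces the correct architecture of that external proof: the Stein equation with $\|g'\|_\infty\le\sqrt{2/\pi}$ and $\|g''\|_\infty\le 2$, the integration-by-parts identity $\EE[Fg(F)]=\EE[\langle \mathrm{D}g(F),-\mathrm{D}L^{-1}F\rangle_{L^2(\Lambda)}]$, the split into $\EE[g'(F)(1-\langle \mathrm{D}F,-\mathrm{D}L^{-1}F\rangle)]$ plus a Taylor remainder, the Poincar\'e inequality applied to $\langle \mathrm{D}F,-\mathrm{D}L^{-1}F\rangle$, and the discrete product rule generating the second-order differences that populate $\alpha_1$ and $\alpha_2$. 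This is the right route and the accounting of which term produces which $\alpha_i$ is essentially as in the source.

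The one step that would fail as written is the claimed pointwise contraction $|\mathrm{D}_xL^{-1}F|\le|\mathrm{D}_xF|$. On the $n$-th chaos, $\mathrm{D}_x(-L^{-1}F)$ carries kernel weight $1$ where $\mathrm{D}_xF$ carries weight $n$; this gives a comparison of second moments, not of the random variables $\omega$ by $\omega$, and the pointwise inequality is false in general. The actual proof replaces it by the Mehler-formula representation $-\mathrm{D}_xL^{-1}F=\int_0^\infty e^{-s}\,P_s(\mathrm{D}_xF)\,\mathrm{d}s$, where $P_s$ is the Ornstein--Uhlenbeck semigroup realised by independent thinning and refilling of $\eta$; since $\int_0^\infty e^{-s}\,\mathrm{d}s=1$ this exhibits $-\mathrm{D}_xL^{-1}F$ as an average, and Jensen plus the contractivity of $P_s$ yield the moment bounds $\EE[|\mathrm{D}_x(-L^{-1}F)|^p]\le\EE[|\mathrm{D}_xF|^p]$ together with the mixed-moment estimates (Lemma 3.4 in Last--Peccati--Schulte) that are needed both to convert the two cross terms of the Poincar\'e expansion into $\sqrt{\alpha_1(F)}$ each and to bound the Taylor remainder by $\alpha_3(F)$. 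Every place you invoke the contraction it sits inside an expectation against powers of $\mathrm{D}F$ and $\mathrm{D}^2F$, so your conclusion survives, but the justification must go through Mehler and Jensen rather than a pointwise domination; as stated, that link in your chain is broken.
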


We will now apply this theorem to our setting.

\begin{lemma}\label{Lemma_Apply_Malliavin_Stein_Bound}
	Let $f: \mathcal{C}^{d+1} \to \RR$, $s\geq1$ and $W_s = [-\frac{s}{2}, \frac{s}{2}]^{d} \times [0,T]$. Assume that $|f(A)|<\infty$ for all $A \in \mathcal{C}^{d+1}$ and that there exist constants $c_1$, $c_2$, $c_3$ and $R \in \RR$ such that the following conditions are met for all $x = (p,v), y= (q,w) \in  \RR^d \times \MM$:
	\begin{itemize}
		\item[$\mathrm{(A)}$] $\mathrm{D}_xf(Z(\xi) \cap W_s) = 0\quad \PP \text{-a.s.}$ \text{for} $\|p\| > R+s$,
		\item[$\mathrm{(B)}$] $\VV\big[f(Z(\xi) \cap W_s)\big] \geq c_1 \cdot \lambda_{d+1}(W_s)$,\vspace*{0.1cm}
		\item[$\mathrm{(C)}$] $\max\{\EE\big[\mathrm{D}_{x}f(Z(\xi) \cap W_s)^4\big],~\EE\big[\mathrm{D}_{x}f(Z(\xi+\delta_y) \cap W_s)^4\big]\} \leq c_2$,\vspace*{0.1cm}
		\item[$\mathrm{(D)}$] $\|p - q\|>R \Rightarrow \EE[\big(\mathrm{D}_{x,y}^2 f(Z(\xi) \cap W_s)\big)^4] \leq \frac{c_3}{\lambda_{d+1}(W_s)^4}$.
	\end{itemize}\vspace*{0.25cm}
	Then, there is a constant $c \in \RR$ such that, for a standard normal random variable $N$,
	\begin{equation*}
		\mathrm{d_W}\Bigl(\frac{f(Z(\xi) \cap W_s)-\EE\big[f(Z(\xi) \cap W_s)\big]}{\sqrt{\VV\big[f(Z(\xi) \cap W_s)\big]}}, N\Bigr) \leq \frac{c}{\sqrt{\lambda_{d+1}(W_s)}}.
	\end{equation*}
\end{lemma}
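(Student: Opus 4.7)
The plan is to apply Theorem \ref{Theorem_LP18_Limit_Theorem} to the centered and normalized Poisson functional
\begin{equation*}
F := \frac{f(Z(\xi) \cap W_s) - \EE[f(Z(\xi) \cap W_s)]}{\sqrt{\VV[f(Z(\xi) \cap W_s)]}}.
\end{equation*}
Since $\mathrm{D}_xF = \mathrm{D}_xf(Z(\xi)\cap W_s)/\sqrt{\VV[\cdots]}$ and analogously for $\mathrm{D}^2_{x,z}F$, condition (B) ensures that each first- or second-order difference of $F$ carries a normalization factor bounded by $(c_1\lambda_{d+1}(W_s))^{-1/2}$. The goal then is to establish $\alpha_3 \leq c\,\lambda_{d+1}(W_s)^{-1/2}$ and $\alpha_1,\alpha_2 \leq c\,\lambda_{d+1}(W_s)^{-1}$. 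Two preliminary observations will be used throughout. First, the identity $\mathrm{D}^2_{x,z}f(\mu) = \mathrm{D}_xf(\mu+\delta_z) - \mathrm{D}_xf(\mu)$ together with $(a-b)^4 \leq 8(a^4+b^4)$ and both parts of condition (C) yield a universal bound $\EE[(\mathrm{D}^2_{x,z}f(Z(\xi)\cap W_s))^4] \leq 16 c_2$; when the basepoints additionally satisfy $\|p-r\|>R$ (with $x=(p,v)$, $z=(r,u)$), condition (D) sharpens this to $c_3\lambda_{d+1}(W_s)^{-4}$. Second, by condition (A) any spatial integration in a variable appearing in a difference operator is restricted to the ball of radius $R+s$ around the origin, whose $\lambda_d$-measure is of order $\lambda_{d+1}(W_s)$; the $\MM$-integration is harmless because $\QQ$ is a probability measure.

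For $\alpha_3$, Hölder's inequality together with (C) gives $\EE[|\mathrm{D}_xF|^3] \leq c\,\lambda_{d+1}(W_s)^{-3/2}$, and integrating over the single spatial variable (of volume $O(\lambda_{d+1}(W_s))$) yields $\alpha_3 \leq c\,\lambda_{d+1}(W_s)^{-1/2}$.

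The substantial part of the proof is the treatment of $\alpha_1$ and $\alpha_2$. After reducing the integrands by Cauchy--Schwarz to products of fourth moments of $\mathrm{D}F$ and $\mathrm{D}^2F$, I would split the triple integration into the four regions determined by whether $\|p-r\|$ and $\|q-r\|$ exceed $R$ (with $x=(p,v)$, $y=(q,w)$, $z=(r,u)$). When both differences are $\leq R$, the basepoints $p$ and $q$ are confined to balls of fixed radius around $r$, so the spatial volume is only $O(\lambda_{d+1}(W_s))$; the universal fourth-moment bound then makes the contribution $O(\lambda_{d+1}(W_s)^{-1})$. When exactly one of $\|p-r\|,\|q-r\|$ exceeds $R$, the free spatial volume grows to $O(\lambda_{d+1}(W_s)^2)$, and condition (D) provides the extra decay $\lambda_{d+1}(W_s)^{-4}$ on one of the second-order factors, again producing $O(\lambda_{d+1}(W_s)^{-1})$. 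When both exceed $R$, the volume is the full $O(\lambda_{d+1}(W_s)^3)$, but (D) can now be invoked for both second-order factors, once more producing the same order. Summing the four contributions shows $\alpha_1,\alpha_2 \leq c\,\lambda_{d+1}(W_s)^{-1}$, and the asserted bound follows from Theorem \ref{Theorem_LP18_Limit_Theorem}.

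The main obstacle is precisely this four-way case analysis: the gain of $\lambda_{d+1}(W_s)^{-4}$ from (D) is only available when the corresponding basepoints are well-separated, and one has to balance this decay against the larger integration volume available in the far regions, all while carefully tracking the extra factors of $\VV^{-1/2}$ picked up from the normalization.
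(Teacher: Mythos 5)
Your proposal is correct and follows essentially the same route as the paper: standardize, invoke the second-order Poincaré/Malliavin--Stein bound, control the second-order differences via $\mathrm{D}^2_{x,z}f(\mu)=\mathrm{D}_xf(\mu+\delta_z)-\mathrm{D}_xf(\mu)$ together with (C), and then decompose the triple integrals for $\alpha_1,\alpha_2$ according to whether the basepoints are within distance $R$ of each other, using (D) in the far regions --- which is exactly the paper's splitting into $A_z$ and $A_z^C$. The order counting in each of your cases matches the paper's estimates, so no gap remains.
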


The proof of this lemma gives insight into the composition of $c$, which is rather complex. When proving our main results however, we will always set $c_3=0$. In that case the constant is reduced to the following:

\begin{remark}
	Let $c_3=0$. In that case the constant $c$ in Lemma \ref{Lemma_Apply_Malliavin_Stein_Bound} is given by
	\begin{equation*}
		c = \frac{\gamma\sqrt{c_{d,R,T}}}{c_1^\frac{3}{2}}\left(8\sqrt{\gamma c_1 c_2} \lambda_d(\mathbb{B}_d(R)) + (1+c_2)\sqrt{c_{d,R,T}} \right)
	\end{equation*}
	where $c_{d,R,T} = \frac{2^{2d-1}(R^d+\frac{1}{2^d})}{T}$. Note however, that $R$ will depend on $T$.
\end{remark}
An interesting fact is that the assumptions in the lemma imply an upper bound on the variance, which is of the same order as the lower bound.
\begin{remark}\label{Remark_TBC_Upper_Bound_Variance}
	Since (C) implies $\EE[\mathrm{D}_{x}f(Z(\xi) \cap W_s)^2] \leq 1+c_2$, the Poincaré inequality gives us the upper bound
	\begin{equation*}
		\VV\big[f(Z(\xi) \cap W_s)\big] \leq \int_{W_s \times \MM} \EE\big[\mathrm{D}_{x}f(Z(\xi) \cap W_s)^2\big] \Lambda(\mathrm{d}x) \leq (1+c_2) \cdot \lambda_{d+1}(W_s).
	\end{equation*}
\end{remark}
\bigskip
\begin{proof}[Proof of Lemma \ref{Lemma_Apply_Malliavin_Stein_Bound}]
	We start by defining
	\begin{equation*}
		g(\xi) := \frac{f\big(Z(\xi) \cap W_s\big) - \EE[f\big(Z(\xi) \cap W_s\big)]}{\sqrt{\VV[f\big(Z(\xi) \cap W_s\big)]}}
	\end{equation*}
	and aim to use Theorem \ref{Theorem_LP18_Limit_Theorem} on the Poisson functional $G$ with representative $g$. First let us check if the prerequisites are met. From the finiteness of $f$ and condition (B) it follows that $\EE[|G|^2] < \infty$. As we have seen before, as a function of $f$ the difference operator is linear. This $\PP \text{-a.s.}$ leads to
	\begin{equation}\label{Equation_Difference_Operator_of_Standardization_of_G_Linear}
		|\mathrm{D}_{x}G| = \frac{|\mathrm{D}_{x}f(Z(\xi) \cap W_s)|}{\sqrt{\VV[f(Z(\xi) \cap W_s)]}}.
	\end{equation}
	It follows now from conditions (A) and (B) that $\EE\big[\int (\mathrm{D}_x G)^2 \Lambda(\mathrm{d}x) \big] < \infty$. Thus we can apply the theorem and must now derive bounds for $\alpha_1(G)$, $\alpha_2(G)$ and $\alpha_3(G)$. So let $x,y \in \RR^d \times \MM$. By the Cauchy-Schwarz inequality, \eqref{Equation_Difference_Operator_of_Standardization_of_G_Linear} and stationarity of the Poisson process we have
	\begin{align}\label{Equation_First_Order_Difference_Operator_Bound_Condition_C}
		\EE[(\mathrm{D}_{x}G)^2 (\mathrm{D}_{y}G)^2] &\leq \sqrt{\EE[(\mathrm{D}_{x}G)^4]} \sqrt{\EE[(\mathrm{D}_{y}G)^4]}\nonumber\\
		&= \frac{\EE[\big(\mathrm{D}_{x}f(Z(\xi) \cap W_s)\big)^4]}{\sqrt{\VV[f(Z(\xi) \cap W_s)]}^4} \nonumber\\
		&\leq \frac{c_2}{c_1^2 \cdot \lambda_{d+1}(W_s)^2}
	\end{align}
	where the last inequality follows from conditions (B) and  (C). To get a bound on the second order operators, we use the well-known fact that for $a,b \in \RR$ and $q\geq1$ the inequality $|a+b|^q \leq 2^{q-1}(|a|^q+|b|^q)$ holds. Additionally, we use the linearity of the difference operator and then (B) and (C) again.
	\begin{align}\label{Equation_Second_Order_Difference_Operator_Bound_Condition_C}
		\EE[|\mathrm{D}^2_{x,y}G|^4] &= \frac{\EE[|\mathrm{D}_{x,y}^2f\big(Z(\xi) \cap W_s)|^4]}{\VV[f(Z(\xi) \cap W_s)]^2}\nonumber \\
		&\leq \frac{2^3 \cdot\big(\EE[|\mathrm{D}_{x}f\big(Z(\xi + \delta_y) \cap W_s)\big)|^4]+\EE[|\mathrm{D}_{x}f(Z(\xi) \cap W_s)|^4]\big)}{c_1^2 \cdot \lambda_{d+1}(W_s)^2}\nonumber\\
		&\leq \frac{2^4 \cdot c_2}{c_1^2 \cdot \lambda_{d+1}(W_s)^2}.
	\end{align}
	Condition (D) sharpens this estimate to
	\begin{align}\label{Equation_Second_Order_Difference_Operator_Bound_Condition_D}
		\EE[|\mathrm{D}^2_{x,y}G|^4] &= \frac{\EE[|\mathrm{D}_{x,y}^2f\big(Z(\xi) \cap W_s)|^4]}{\VV[f(Z(\xi) \cap W_s)]^2}
		\leq \frac{c_3}{c_1^2 \cdot \lambda_{d+1}(W_s)^6}.
	\end{align}
	Now let $\overline{W}_{R,s} = [-\frac{s}{2}, \frac{s}{2}]^d + \mathbb{B}_d(R)$. It follows from condition (A), \eqref{Equation_First_Order_Difference_Operator_Bound_Condition_C} and Cauchy-Schwarz that
	\begin{align*}
		\alpha_1(G) &= \int_{(\RR^d \times \MM)^3} \sqrt{\EE[(\mathrm{D}_{x}G)^2 (\mathrm{D}_{y}G)^2]}\sqrt{\EE[(\mathrm{D}_{x,z}^2G)^2 (\mathrm{D}_{y,z}^2G)^2]} ~\Lambda^3\big(\mathrm{d}(x,y,z)\big)\\
		 &= \int_{(\overline{W}_{R,s} \times \MM)^3} \sqrt{\EE[(\mathrm{D}_{x}G)^2 (\mathrm{D}_{y}G)^2]}\sqrt{\EE[(\mathrm{D}_{x,z}^2G)^2 (\mathrm{D}_{y,z}^2G)^2]} ~\Lambda^3\big(\mathrm{d}(x,y,z)\big)\\
		 &\leq \int_{(\overline{W}_{R,s} \times \MM)^3} \frac{\sqrt{c_2}}{c_1 \cdot \lambda_{d+1}(W_s)} \cdot \sqrt{\EE[(\mathrm{D}_{x,z}^2G)^2 (\mathrm{D}_{y,z}^2G)^2]}~\Lambda^3 \big(\mathrm{d}(x,y,z)\big) \\
		&\leq \int_{(\overline{W}_{R,s} \times \MM)^3} \frac{\sqrt{c_2}}{c_1 \cdot \lambda_{d+1}(W_s)} \cdot \sqrt[4]{\EE[\big(\mathrm{D}_{x,z}^2G\big)^4]}\sqrt[4]{\EE[\big(\mathrm{D}_{y,z}^2G\big)^4]}~\Lambda^3 \big(\mathrm{d}(x,y,z)\big).
	\end{align*}
	We will now disassemble the domain of integration in order to properly apply our conditions. To that end, set $A_x = \mathbb{B}_d(x_1,R)\times \MM$ and $A_x^C = \overline{W}_{R,s}\backslash\mathbb{B}_d(x_1,R) \times \MM$. This gives us the decomposition
	\begin{align}\label{Equation_Alpha_1_Decomposition}
		&\int_{(\overline{W}_{R,s} \times \MM)^3} \sqrt[4]{\EE[\big(\mathrm{D}_{x,z}^2G\big)^4]}\sqrt[4]{\EE[\big(\mathrm{D}_{y,z}^2G\big)^4]}~\Lambda^3 \big(\mathrm{d}(x,y,z)\big)\\
		=& \int_{\overline{W}_{R,s} \times \MM} \int_{A_z} \int_{A_z} \sqrt[4]{\EE[\big(\mathrm{D}_{x,z}^2G\big)^4]}\sqrt[4]{\EE[\big(\mathrm{D}_{y,z}^2G\big)^4]} ~\Lambda \big(\mathrm{d}x\big)~\Lambda \big(\mathrm{d}y\big) ~\Lambda \big(\mathrm{d}z\big)\nonumber\\
		&+ \int_{\overline{W}_{R,s} \times \MM} \int_{A_z} \int_{A_z^C} \sqrt[4]{\EE[\big(\mathrm{D}_{x,z}^2G\big)^4]}\sqrt[4]{\EE[\big(\mathrm{D}_{y,z}^2G\big)^4]} ~\Lambda \big(\mathrm{d}x\big)~\Lambda \big(\mathrm{d}y\big) ~\Lambda \big(\mathrm{d}z\big)\nonumber\\
		&+ \int_{\overline{W}_{R,s} \times \MM} \int_{A_z^C} \int_{A_z} \sqrt[4]{\EE[\big(\mathrm{D}_{x,z}^2G\big)^4]}\sqrt[4]{\EE[\big(\mathrm{D}_{y,z}^2G\big)^4]} ~\Lambda \big(\mathrm{d}x\big)~\Lambda \big(\mathrm{d}y\big) ~\Lambda \big(\mathrm{d}z\big)\nonumber\\
		&+ \int_{\overline{W}_{R,s} \times \MM} \int_{A_z^C} \int_{A_z^C} \sqrt[4]{\EE[\big(\mathrm{D}_{x,z}^2G\big)^4]}\sqrt[4]{\EE[\big(\mathrm{D}_{y,z}^2G\big)^4]} ~\Lambda \big(\mathrm{d}x\big)~\Lambda \big(\mathrm{d}y\big) ~\Lambda \big(\mathrm{d}z\big).\nonumber
	\end{align}
	The first summand on the right hand side of \eqref{Equation_Alpha_1_Decomposition} can now be bounded using \eqref{Equation_Second_Order_Difference_Operator_Bound_Condition_C}. This yields
	\begin{align*}
		&\int_{\overline{W}_{R,s} \times \MM} \int_{A_z} \int_{A_z} \sqrt[4]{\EE[\big(\mathrm{D}_{x,z}^2G\big)^4]}\sqrt[4]{\EE[\big(\mathrm{D}_{y,z}^2G\big)^4]} ~\Lambda \big(\mathrm{d}x\big)~\Lambda \big(\mathrm{d}y\big) ~\Lambda \big(\mathrm{d}z\big)\\
		&\leq \lambda_{d}(\overline{W}_{R,s}) \lambda_{d}\big(\mathbb{B}_d(R)\big)^2 \frac{4\gamma^3 \sqrt{c_2}}{c_1 \cdot \lambda_{d+1}(W_s)}.
	\end{align*}
	By Fubini's Theorem, the second and third summand are the same. Note that the implication in condition $(D)$ applies to all pairs $(x,w)$ with $w \in A_x^C$, since the required distance is met. Thus we can bound these summands by using \eqref{Equation_Second_Order_Difference_Operator_Bound_Condition_C} and \eqref{Equation_Second_Order_Difference_Operator_Bound_Condition_D} which gets us
	\begin{align*}
		&2\cdot\int_{\overline{W}_{R,s} \times \MM} \int_{A_z} \int_{A_z^C} \sqrt[4]{\EE[\big(\mathrm{D}_{x,z}^2G\big)^4]}\sqrt[4]{\EE[\big(\mathrm{D}_{y,z}^2G\big)^4]} ~\Lambda \big(\mathrm{d}x\big)~\Lambda \big(\mathrm{d}y\big) ~\Lambda \big(\mathrm{d}z\big)\\
		&\leq 2\gamma^3 \cdot\lambda_{d}\big(\mathbb{B}_d(R)\big)\lambda_{d}(\overline{W}_{R,s})^2 \frac{2\sqrt[4]{c_2}}{\sqrt{c_1} \cdot \sqrt{\lambda_{d+1}(W_s)}}\frac{\sqrt[4]{c_3}}{\sqrt{c_1} \cdot \lambda_{d+1}(W_s)^\frac{3}{2}}.
	\end{align*}
	Another application of \eqref{Equation_Second_Order_Difference_Operator_Bound_Condition_D} gives us a bound on the last summand.
	\begin{align*}
		&\int_{\overline{W}_{R,s} \times \MM} \int_{A_z^C} \int_{A_z^C} \sqrt[4]{\EE[\big(\mathrm{D}_{x,z}^2G\big)^4]}\sqrt[4]{\EE[\big(\mathrm{D}_{y,z}^2G\big)^4]} ~\Lambda \big(\mathrm{d}x\big)~\Lambda \big(\mathrm{d}y\big) ~\Lambda \big(\mathrm{d}z\big) \\
		&\leq \lambda_{d}(\overline{W}_{R,s})^3 \frac{\gamma^3\sqrt{c_3}}{c_1 \cdot \lambda_{d+1}(W_s)^3}.
	\end{align*}
	Next we note that for the volume of $\overline{W}_{R,s}$ we have
	\begin{align}\label{Equation_Volume_Observation_Window_Upper_Bound}
		\lambda_{d}(\overline{W}_{R,s}) &\leq (2R+s)^d \leq 2^{d-1}(2^dR^d+s^d) \leq 2^{d-1}(2^dR^d+1)s^d\\
		&= \frac{2^{2d-1}(R^d+\frac{1}{2^d})}{T}\lambda_{d+1}(W_s) =: c_{d,R,T}\cdot\lambda_{d+1}(W_s)  .\nonumber
	\end{align}
	Using these results, we can now further estimate $\alpha_1(G)$ by
	\begin{align*}
		\alpha_1(G) \leq& \frac{\sqrt{c_2}}{c_1 \cdot \lambda_{d+1}(W_s)} \cdot \int_{(\overline{W}_{R,s} \times \MM)^3} \sqrt[4]{\EE[\big(\mathrm{D}_{x,z}^2G\big)^4]}\sqrt[4]{\EE[\big(\mathrm{D}_{y,z}^2G\big)^4]}~\Lambda^3 \big(\mathrm{d}(x,y,z)\big)\\
		\leq& \lambda_{d}(\overline{W}_{R,s}) \lambda_{d}\big(\mathbb{B}_d(R)\big)^2 \frac{4\gamma^3c_2}{c_1^2 \cdot \lambda_{d+1}(W_s)^2}\\
		&+ 2\cdot\lambda_{d}\big(\mathbb{B}_d(R)\big)\lambda_{d}(\overline{W}_{R,s})^2 \frac{2\gamma^3c_2^\frac{3}{4}\sqrt[4]{c_3}}{c_1^2 \cdot \lambda_{d+1}(W_s)^3}\\
		&+ \lambda_{d}(\overline{W}_{R,s})^3 \frac{\gamma^3\sqrt{c_2}\sqrt{c_3}}{c_1^2 \cdot \lambda_{d+1}(W_s)^4}\\
		=& \frac{\gamma^3 c_{d,R,T}\sqrt{c_2}}{c_1^2 \cdot \lambda_{d+1}(W_s)}\big(4\sqrt{c_2}\lambda_{d}\big(\mathbb{B}_d(R)\big)^2 + 4\sqrt[4]{c_2 c_3}c_{d,R,T}\lambda_{d}\big(\mathbb{B}_d(R)\big) + \sqrt{c_3}c_{d,R,T}^2\big).
	\end{align*}
	Next we take care of $\alpha_2(G)$. We use the Cauchy-Schwarz inequality to get
	\begin{align*}
		\alpha_2(G) &= \int_{(\RR^d \times \MM)^3} \EE[(\mathrm{D}_{x,z}^2G)^2 (\mathrm{D}_{y,z}^2G)^2] ~\Lambda^3\big(\mathrm{d}(x,y,z)\big)\\
		&\leq \int_{(\overline{W}_{R,s} \times \MM)^3} \sqrt{\EE[\big(\mathrm{D}_{x,z}^2G\big)^4]}\sqrt{\EE[\big(\mathrm{D}_{y,z}^2G\big)^4]}~\Lambda^3 \big(\mathrm{d}(x,y,z)\big)
	\end{align*}
	and apply the same decomposition argument as in the treatment of $\alpha_1(G)$.
	\begin{align*}
		\alpha_2(G) \leq& \lambda_{d}(\overline{W}_{R,s}) \lambda_{d}\big(\mathbb{B}_d(R)\big)^2 \frac{2^4 \gamma^3 \cdot c_2}{c_1^2 \cdot \lambda_{d+1}(W_s)^2}\\
		&+ 2\cdot\lambda_{d}\big(\mathbb{B}_d(R)\big)\lambda_{d}(\overline{W}_{R,s})^2 \frac{4\gamma^3\sqrt{c_2}}{c_1 \cdot \lambda_{d+1}(W_s)}\frac{\sqrt{c_3}}{c_1 \cdot \lambda_{d+1}(W_s)^3}\\
		&+ \lambda_{d}(\overline{W}_{R,s})^3 \frac{\gamma^3c_3}{c_1^2 \cdot \lambda_{d+1}(W_s)^6}\\
		\leq& \frac{\gamma^3c_{d,R,T}}{c_1^2 \cdot \lambda_{d+1}(W_s)}\big(16c_2\lambda_{d}\big(\mathbb{B}_d(R)\big)^2 + 8\sqrt{c_2c_3}c_{d,R,T}\lambda_{d}\big(\mathbb{B}_d(R)\big) + c_3c_{d,R,T}^2 \big).
	\end{align*}
	Finally, we use \eqref{Equation_Difference_Operator_of_Standardization_of_G_Linear}, (B) and then note that (C) implies $\EE[|\mathrm{D}_{x}f(Z(\xi \cap W_s))|^3] \leq 1 + c_2$ to get
	\begin{align*}
		\alpha_3(G) &\leq \int_{\overline{W}_{R,s} \times \MM} \EE[|\mathrm{D}_{x}G|^3] ~\Lambda\big(\mathrm{d}x\big)\\
		&= \int_{\overline{W}_{R,s} \times \MM}\EE\left[\frac{|\mathrm{D}_{x}f(Z(\xi \cap W_s))|^3}{\sqrt{\VV[f(Z(\xi \cap W_s))]}^3}\right] ~\Lambda\big(\mathrm{d}x\big)\\
		&\leq \int_{\overline{W}_{R,s} \times \MM} \frac{1}{c_1^\frac{3}{2} \lambda_{d+1}(W_s)^\frac{3}{2}} \cdot \EE[|\mathrm{D}_{x}f(Z(\xi \cap W_s))|^3] ~\Lambda\big(\mathrm{d}x\big)\\
		&\leq \frac{\gamma(1+c_2)\cdot c_{d,R,T}}{c_1^\frac{3}{2} \sqrt{\lambda_{d+1}(W_s)}}.
	\end{align*}
	Using these bounds, Theorem \ref{Theorem_LP18_Limit_Theorem} now gives us the asserted statement.
\end{proof}

\section{Central Limit Theorems}\label{Section_Main_results_and_proofs_Cylinders}

We will now present our limit theorems on the asymptotic behaviour of $Z(\xi)$. We start with the volume of the union of time bounded cylinders restricted to a compact window of observation. To be more precise, we are interested in the asymptotic distribution of the random variable
\begin{equation*}
	\lambda_{d+1}(Z(\xi) \cap W_s)
\end{equation*}
where, as before, $W_s := [-\frac{s}{2}, \frac{s}{2}]^{d} \times [0,T]$. Additionally to Theorem \ref{Theorem_Intro_TBC_Vol} we will provide a rate of convergence and in fact prove the following:

\begin{theorem}\label{Theorem_TBC_volume_limit_theorem}
	Let $N$ denote a standard normal random variable and $Z(\xi)$ as in Definition \ref{Definition_TBC_model}. Assume $r<s$. Then there exists a constant $c \in \RR_+$ such that
	\begin{equation*}
		\mathrm{d}_W\Big(\frac{\lambda_{d+1}(Z(\xi) \cap W_s) - \EE[\lambda_{d+1}(Z(\xi) \cap W_s)]}{\sqrt{\VV[\lambda_{d+1}(Z(\xi) \cap W_s)]}}, N\Big) \leq \frac{c}{\sqrt{\lambda_{d+1}(W_s)}}.
	\end{equation*}
\end{theorem}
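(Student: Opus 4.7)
The plan is to apply Lemma \ref{Lemma_Apply_Malliavin_Stein_Bound} with $f = \lambda_{d+1}$ and $c_3 = 0$, so the work reduces to verifying hypotheses (A)--(D) for this particular functional. Since $Z(\xi+\delta_x) = Z(\xi)\cup \mathrm{Cyl}(x)$, additivity of the Lebesgue measure gives the explicit identity
\begin{equation*}
	\mathrm{D}_x f(Z(\xi)\cap W_s) = \lambda_{d+1}\bigl((\mathrm{Cyl}(x)\setminus Z(\xi))\cap W_s\bigr),
\end{equation*}
a nonnegative quantity bounded above by $\lambda_{d+1}(\mathrm{Cyl}(x))$. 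A Cavalieri slicing argument shows that every cylinder has $(d{+}1)$-volume exactly $T r^d \kappa_d$ regardless of direction, so (C) holds with $c_2 = (T r^d \kappa_d)^4$. For (A), Definition \ref{Defininition_Cylinder_Scope} implies $\mathrm{Cyl}(p,v) \subset \mathbb{B}_d(p, R_h+r)\times[0,T]$; once the basepoint $p$ is sufficiently far from $[-s/2,s/2]^d$ the cylinder cannot meet $W_s$ and the difference operator vanishes. Iterating the difference operator and using inclusion--exclusion gives
\begin{equation*}
	\mathrm{D}^2_{x,y} f(Z(\xi)\cap W_s) = -\lambda_{d+1}\bigl((\mathrm{Cyl}(x)\cap\mathrm{Cyl}(y)\cap W_s)\setminus Z(\xi)\bigr),
\end{equation*}
which vanishes whenever $\mathrm{Cyl}(x)\cap\mathrm{Cyl}(y) = \emptyset$; by the observation preceding \eqref{Equation_Probability_Cylinder_hits_TBC} this is guaranteed as soon as $\|p-q\| > 2(R_h+r) =: R$, establishing (D) with $c_3 = 0$.

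The main obstacle is the variance lower bound (B), since the other three hypotheses only exploited geometric upper bounds. The plan is to use the lower bound coming from the first chaos term in the Wiener--It\^o decomposition of a square-integrable Poisson functional, namely
\begin{equation*}
	\VV\bigl[f(Z(\xi)\cap W_s)\bigr] \;\geq\; \int_{\RR^d\times\MM} \EE\bigl[\mathrm{D}_x f(Z(\xi)\cap W_s)\bigr]^2 \, \Lambda(\mathrm{d}x).
\end{equation*}
Fubini applied to the explicit formula for $\mathrm{D}_x f$, together with the pointwise hit probability \eqref{Equation_Probability_x_hits_TBC}, yields
\begin{equation*}
	\EE\bigl[\mathrm{D}_x f(Z(\xi)\cap W_s)\bigr] = e^{-\gamma r^d \kappa_d}\,\lambda_{d+1}\bigl(\mathrm{Cyl}(x)\cap W_s\bigr).
\end{equation*}
Every basepoint $p$ in the inner block $[-s/2+R_h+r,\,s/2-R_h-r]^d$ (non-empty once $s > 2(R_h+r)$) yields a cylinder contained entirely inside $W_s$, so the right-hand side there equals the constant $e^{-\gamma r^d\kappa_d}\,T r^d\kappa_d$. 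Squaring and integrating over this block, whose $d$-volume grows like $s^d$ and hence like $\lambda_{d+1}(W_s)$, produces an explicit constant $c_1>0$ with $\VV[\lambda_{d+1}(Z(\xi)\cap W_s)] \geq c_1\,\lambda_{d+1}(W_s)$ for all sufficiently large $s$, which is (B).

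With (A)--(D) in hand and $c_3=0$, Lemma \ref{Lemma_Apply_Malliavin_Stein_Bound} delivers precisely the Wasserstein rate of order $1/\sqrt{\lambda_{d+1}(W_s)}$ asserted in the theorem; any finite set of small $s$ is absorbed into the constant $c$.
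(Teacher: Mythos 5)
Your proposal is correct and follows the paper's overall strategy --- apply Lemma \ref{Lemma_Apply_Malliavin_Stein_Bound} with $f=\lambda_{d+1}$ and $c_3=0$, and verify (A), (C), (D) through the geometric localization of cylinders --- but your verification of the variance lower bound (B) is genuinely different. The paper writes $\VV[\lambda_{d+1}(Z(\xi)\cap W_s)]$ as a double integral over $W_s^2$ of the two-point covariance $\PP(x\notin Z,\,y\notin Z)-\PP(x\notin Z)\PP(y\notin Z)$, uses the capacity functional of Lemma \ref{Lemma_TBC_Capacity Functional} to bound this covariance below by a constant $\tau>0$ whenever $\|x-y\|\leq r$, and then integrates over such pairs; this works under the stated hypothesis $r<s$. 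You instead invoke the first-chaos lower bound $\VV[F]\geq\int\EE[\mathrm{D}_xF]^2\,\Lambda(\mathrm{d}x)$ from the Wiener--It\^o decomposition and compute $\EE[\mathrm{D}_xf]$ exactly from \eqref{Equation_Probability_x_hits_TBC}; this is shorter and yields an explicit constant, at the price of requiring $s>2(R_h+r)$ for the inner block to be nonempty, so you must absorb a bounded range of $s$ into $c$ (legitimate, since $\mathrm{d_W}(F,N)\leq\EE|F|+\EE|N|\leq 2$ for any standardized $F$, provided you also note that the variance is strictly positive for those $s$ --- which your chaos bound still gives, as $\EE[\mathrm{D}_xf]>0$ on a set of positive $\Lambda$-measure). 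You should cite the Fock space representation in \cite{Last2018} for the chaos inequality, since the paper only uses the Poincar\'e upper bound. Two smaller, harmless deviations: your Cavalieri computation gives the exact cylinder volume $Tr^d\kappa_d$, sharpening the paper's bound $Tr^d\kappa_d/h$ in condition (C), and your closed-form expressions for $\mathrm{D}_xf$ and $\mathrm{D}^2_{x,y}f$ make (A) and (D) transparent; both are correct.
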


We will remark that the capacity functional of the model admits a beautiful formula for the expected volume. An application of Fubini's theorem and \eqref{Equation_Probability_x_hits_TBC} immediately yield
\begin{align*}\label{expectedvolume}
	\EE[\lambda_{d+1}(Z(\xi) \cap W_s)] &= \EE\Big[\int_{W_s} \mathbbm{1}(x \in Z(\xi)) ~\lambda_{d+1}(\mathrm{d}x)\Big]\nonumber\\
	&= \big(1 - \exp(-\gamma \cdot r^d \kappa_d)\big) \cdot \lambda_{d+1}(W_s).
\end{align*}

Our second main result concerns itself with isolated cylinders. 
These represent nodes that have no contact to other nodes for the time frame at all. As before, we prove Theorem \ref{Theorem_Intro_TBC_Iso} by proving a stronger result offering rates of convergence. We start by considering the point process of isolated cylinders $\xi_{\text{Iso}}$ where
\begin{equation*}
	\xi_{\text{Iso}}(A) = \int_{\RR^d \times \MM} \mathbbm{1} \big(p \in A\big) \cdot \mathbbm{1}\big(\mathrm{Cyl}(p,v) \cap Z(\xi - \delta_{(p,v)}) = \emptyset \big) ~\xi \big(\mathrm{d}(p,v)\big)
\end{equation*}
counts the isolated cylinders with basepoint in $A \subset \RR^d$. Let us make a few remarks about its expectation. By the Mecke equation and Lemma \ref{Lemma_TBC_Capacity Functional} we have
\begin{align*}
	\EE[\xi_{\text{Iso}}(A)] &= \int_{A \times \MM} \PP[\mathrm{Cyl}(x) \cap Z(\xi) = \emptyset] ~\Lambda\big(\mathrm{d}x\big) \\
	&= \int_{A \times \MM} 1-\exp\Big(-\gamma \cdot \int_{\MM} \lambda_d\big(\overline{\mathrm{Cyl}(x)}_w + \mathbb{B}_d(r)\Big) ~\QQ(\mathrm{d}w)\big)~\Lambda\big(\mathrm{d}x\big) \\
	&= \gamma \cdot \int_{\MM} 1 - \exp\Big(-\gamma \cdot \int_{\MM} \lambda_d\big(\overline{\mathrm{Cyl}(0,v)}_w + \mathbb{B}_d(r)\big) ~\QQ(\mathrm{d}w)\Big) \QQ(\mathrm{d}v) \cdot \lambda_d(A)
\end{align*}
for the intensity of the point process $\xi_{\text{Iso}}$. Taking \eqref{Equation_Probability_Cylinder_hits_TBC} into account yields
\begin{equation*}
	\gamma \lambda_d(A) \cdot \exp(-\gamma (R_{h}+r)2^{d+1}r^d\kappa_d) \leq \EE[\xi_{\text{Iso}}(A)]  \leq \gamma \lambda_d(A) \cdot \exp(-\gamma (2r)^d \kappa_d).
\end{equation*}
As before, let $s>0$ and $W_s = [-\frac{s}{2},\frac{s}{2}]^d \times [0,T]$. By
\begin{equation*}
	\mathrm{Iso}_{Z(\xi)}(W_s) := \xi_{\text{Iso}}(W_s)
\end{equation*}
we define the random variable counting the number of isolated cylinders with basepoint in $W_s$. We present the following limit theorem.

\begin{theorem}\label{Theorem_TBC_isolated_nodes_limit_theorem}
	Assume $s \geq 6(R_h+r)$ and let $N$ denote a standard normal random variable. There exists a constant $c \in \RR_+$ such that
	\begin{equation*}
		\mathrm{d}_W\Big(\frac{\mathrm{Iso}_{Z(\xi)}(W_s) - \EE[\mathrm{Iso}_{Z(\xi)}(W_s)]}{\sqrt{\VV[\mathrm{Iso}_{Z(\xi)}(W_s)]}}, N\Big) \leq \frac{c}{\sqrt{\lambda_{d+1}(W_s)}}.
	\end{equation*}
\end{theorem}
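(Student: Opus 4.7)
The plan is to derive the CLT from Lemma \ref{Lemma_Apply_Malliavin_Stein_Bound} applied to the Poisson functional $F := \mathrm{Iso}_{Z(\xi)}(W_s)$. Although the lemma is phrased for functionals of the form $f(Z(\xi) \cap W_s)$, its proof operates purely through difference operators and Theorem \ref{Theorem_LP18_Limit_Theorem}, so the verification scheme (A)-(D) transfers verbatim to any sufficiently local Poisson functional. I take $R = 4(R_h + r)$ and set $c_3 = 0$, so that (D) reduces to the requirement that the second-order difference vanish for well-separated basepoints.

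Conditions (A) and (D) then reduce to geometric bookkeeping built on the fact that two cylinders can intersect only if their basepoints lie within distance $2(R_h+r)$. For (A), adding a point $(p,v)$ whose basepoint $p$ is at distance more than $2(R_h+r)$ from $[-\tfrac{s}{2},\tfrac{s}{2}]^d$ produces a cylinder disjoint from every cylinder based in $[-\tfrac{s}{2},\tfrac{s}{2}]^d$; the added cylinder is neither counted itself nor can it change any existing isolation status, so $\mathrm{D}_{(p,v)} F = 0$. For (D) with $c_3 = 0$, if $\|p-q\| > 4(R_h+r)$ then no third basepoint can lie within $2(R_h+r)$ of both $p$ and $q$ and the two new cylinders are themselves disjoint; consequently the effects of the two perturbations decouple and $\mathrm{D}^2_{(p,v),(q,w)} F = 0$. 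For (C), a direct accounting of which summands change when a single marked point is added yields the pointwise bound
\begin{equation*}
|\mathrm{D}_{(p,v)} F| \leq 1 + \xi\bigl(\mathbb{B}_d(p, 2(R_h+r)) \times \MM\bigr),
\end{equation*}
since the new cylinder can contribute at most one to the count itself and can de-isolate only cylinders whose basepoints lie within $2(R_h+r)$ of $p$. The right-hand side is $1$ plus a Poisson random variable with parameter independent of $p$ and $s$, whose fourth moment is a finite constant; the analogous bound for $\xi + \delta_y$ introduces at most one extra unit and remains uniformly bounded.

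The principal obstacle is condition (B), the variance lower bound $\VV[F] \geq c_1 \lambda_{d+1}(W_s)$. My approach is a local modification argument via the law of total variance. Partition $[-\tfrac{s}{2},\tfrac{s}{2}]^d$ into axis-aligned cubes of fixed side length depending only on $R_h + r$ and retain a subcollection $C_1, \dots, C_M$ of mutual distances exceeding $2(R_h+r)$; the hypothesis $s \geq 6(R_h+r)$ ensures $M = \Theta(s^d)$. Let $\mathcal{G}$ denote the $\sigma$-algebra generated by $\xi|_{\RR^d \setminus \bigcup_i C_i}$. Since cylinders based in different $C_i$ are pairwise disjoint by the separation and the restrictions $\xi|_{C_i}$ are independent across $i$, one has the decomposition $\VV[F|\mathcal{G}] = \sum_i \VV[G_i|\mathcal{G}]$, where $G_i$ counts isolated cylinders based in $C_i$. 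On the event that no cylinder visible in $\mathcal{G}$ reaches into $C_i + \mathbb{B}_d(R_h + r)$, an event of probability bounded below uniformly in $i$ via the capacity estimate \eqref{Equation_Probability_Cylinder_hits_TBC}, $G_i$ takes both values $0$ and $1$ with probabilities bounded away from $0$ (corresponding to $\xi(C_i \times \MM) = 0$ versus $\xi(C_i \times \MM) = 1$), giving $\VV[G_i|\mathcal{G}] \geq c > 0$. The law of total variance then yields $\VV[F] \geq \EE\bigl[\sum_i \VV[G_i|\mathcal{G}]\bigr] \geq c' M \geq c_1 \lambda_{d+1}(W_s)$.

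With (A)-(D) in hand, Lemma \ref{Lemma_Apply_Malliavin_Stein_Bound} directly produces the asserted Wasserstein bound of order $1/\sqrt{\lambda_{d+1}(W_s)}$.
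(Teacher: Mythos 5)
Your route is the paper's own: verify conditions (A)--(D) of Lemma \ref{Lemma_Apply_Malliavin_Stein_Bound} for $F=\mathrm{Iso}_{Z(\xi)}(W_s)$ with $c_3=0$, then invoke the lemma. Your treatment of (A), (C) and (D) is correct; indeed your choice $R=4(R_h+r)$ for (D) is the careful one, since $\mathrm{D}_xF(\mu)$ is determined by $\mu$ restricted to basepoints within $2\cdot 2(R_h+r)$ of $p$ (the cylinders that $\Cyl(x)$ de-isolates lie within $2(R_h+r)$ of $p$, and \emph{their} prior isolation status depends on points another $2(R_h+r)$ further out), so the second difference need not vanish at separation $2(R_h+r)$, which is the radius the paper uses.

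The gap is in condition (B). You assert $\VV[F\mid\mathcal{G}]=\sum_i\VV[G_i\mid\mathcal{G}]$, but $F$ does not equal a $\mathcal{G}$-measurable quantity plus $\sum_i G_i$: writing $F=F_{\mathrm{out}}+\sum_i G_i$, the term $F_{\mathrm{out}}$ counting isolated cylinders based outside $\bigcup_i C_i$ is \emph{not} $\mathcal{G}$-measurable, because a cylinder based inside some $C_i$ can reach out and de-isolate a cylinder based just outside it. The conditional variance therefore carries cross terms $\cov(F_{\mathrm{out}},G_i\mid\mathcal{G})$, which are plausibly negative (more points in $C_i$ tend to decrease $F_{\mathrm{out}}$), so neither the identity nor the inequality $\geq$ follows from the independence of the restrictions $\xi|_{C_i}$ alone. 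Your good event does neutralize exactly this interaction for index $i$, but you deploy it only to bound $\VV[G_i\mid\mathcal{G}]$ from below, not to justify the decomposition. Two repairs are available. The paper's: enlarge each $C_i$ (side $2(R_h+r)$) to a box $S_i$ of side $6(R_h+r)$ and condition on $A_i=\{\eta(C_i)=2,\ \eta(S_i\setminus C_i)=0\}$, so that the two special cylinders interact with nothing but each other and contribute exactly $2\cdot\1\bigl(\Cyl(X_{i,1})\cap\Cyl(X_{i,2})=\emptyset\bigr)$, independently of everything else. Alternatively, keep your setup but replace the identity by the projection (Hoeffding) lower bound $\VV[F\mid\mathcal{G}]\geq\sum_i\VV\bigl[\EE[F\mid\mathcal{G},\xi|_{C_i}]\bigm|\mathcal{G}\bigr]$, and check that on your good event $E_i$ (which is $\mathcal{G}$-measurable) one has $\EE[F\mid\mathcal{G},\xi|_{C_i}]=c(\mathcal{G})+G_i$, since there no cylinder based in $C_i$ meets any cylinder based elsewhere. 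Either fix yields $\VV[F]\geq c_1\lambda_{d+1}(W_s)$ and the rest of your argument stands.
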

The last main result takes a first step in analyzing the topological structure of the TBC model and gives insight into its \textit{Euler characteristic}. By $\mathcal{R}^{d}$ let us denote the convex ring over $\RR^{d}$ that is the system containing all unions of finitely many compact, convex subsets of $\RR^{d}$ (cf. \cite[p.601]{Schneider2008}).

\begin{definition}\label{Defintion_Euler_Characteristic}
	In the context of convex geometry the \textbf{Euler characteristic} is a function
	\begin{equation*}
		\chi: \mathcal{R}^{d} \to \ZZ
	\end{equation*}
	with the properties
	\begin{itemize}
		\item $\chi(K)=1$ if $K \subset \RR^d$ convex,
		\item $\chi(\emptyset)=0$ and
		\item $\chi(A \cup B) = \chi(A) + \chi(B) - \chi(A \cap B)$ for all $A,B \subset \RR^d$.
	\end{itemize}
\end{definition}
For reference, see \cite[p. 625]{Schneider2008}. Since our cylinders clearly are convex sets in $\RR^{d+1}$ and $\eta$ is a Poisson process with finite intensity, $\chi(Z(\xi) \cap W_s)$ is well defined for $W_s$ as given above. Theorem \ref{Theorem_Intro_TBC_Euler} follows from:

\begin{theorem}\label{Theorem_TBC_Euler_characteristic_limit_theorem}
	Assume $s \geq 6(R_h+r)$ and let $N$ denote a standard normal random variable. There exists a constant $c \in \RR_+$ such that
	\begin{equation*}
		\mathrm{d}_W\Big(\frac{\chi(Z(\xi) \cap W_s) - \EE[\chi(Z(\xi) \cap W_s)]}{\sqrt{\VV[\chi(Z(\xi) \cap W_s)]}}, N\Big) \leq \frac{c}{\sqrt{\lambda_{d+1}(W_s)}}.
	\end{equation*}
\end{theorem}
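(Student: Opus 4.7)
My plan is to apply Lemma \ref{Lemma_Apply_Malliavin_Stein_Bound} to the functional $f = \chi$ with $R := 2(R_h+r)$ and $c_3 := 0$; the hypothesis $s \ge 6(R_h+r) = 3R$ from the theorem statement then guarantees that $W_s$ has an interior region of linear size, which will be needed when dealing with (B). Conditions (A), (C), (D) are essentially geometric or combinatorial and can be checked directly; the variance lower bound (B) is the main obstacle.

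Conditions (A) and (D) reduce to locality. A cylinder with basepoint $p$ is contained in $\mathbb{B}_d(p, R_h+r) \times [0,T]$, so $\|p\| > R+s$ forces $\mathrm{Cyl}(p,v) \cap W_s = \emptyset$ and hence $\mathrm{D}_x\chi = 0$, which yields (A). For (D), set $A := Z(\xi) \cap W_s$, $B := \mathrm{Cyl}(x) \cap W_s$, $C := \mathrm{Cyl}(y) \cap W_s$ and apply the valuation property of $\chi$ twice, which after a short inclusion--exclusion computation collapses to
\begin{equation*}
\mathrm{D}^2_{x,y} \chi(Z(\xi) \cap W_s) = \chi(A \cap B \cap C) - \chi(B \cap C).
\end{equation*}
When $\|p-q\| > 2(R_h+r)$ the cylinders $\mathrm{Cyl}(x)$ and $\mathrm{Cyl}(y)$ are disjoint, so $B \cap C = \emptyset$ and both terms vanish; this gives (D) with $c_3 = 0$. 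For (C), the same identity gives $\mathrm{D}_x\chi = \chi(\mathrm{Cyl}(x) \cap W_s) - \chi(Z(\xi) \cap \mathrm{Cyl}(x) \cap W_s)$; the first summand is $0$ or $1$ by convexity, while the second is the Euler characteristic of a union of at most $N_x$ convex sets, where $N_x$ counts those cylinders of $\xi$ whose basepoints lie in $\mathbb{B}_d(p, R)$. Inclusion--exclusion gives $|\chi| \le 2^{N_x}$, and since $N_x$ is Poisson with mean $\gamma\lambda_d(\mathbb{B}_d(R))$, the moment $\EE[16^{N_x}]$ is finite and bounds $\EE[(\mathrm{D}_x\chi)^4]$ uniformly in $x$.

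The hard part will be (B). My first approach is to invoke the first-chaos lower bound $\VV[F] \ge \int (\EE[\mathrm{D}_x F])^2 \,\Lambda(\mathrm{d}x)$, which follows from the Fock-space expansion for Poisson functionals. By stationarity of $\xi$, for $x = (p,v)$ with $p$ in the interior window $[-(s/2-R), s/2-R]^d$ the integrand collapses to $\bigl(1 - \EE[\chi(Z(\xi) \cap \mathrm{Cyl}(0,v))]\bigr)^2$, which depends only on $v$; since this interior has $d$-volume of order $s^d$, this produces the required bound $c_1\lambda_{d+1}(W_s)$ provided that $1 - \EE[\chi(Z(\xi) \cap \mathrm{Cyl}(0,v))]$ is bounded away from $0$ on a $\QQ$-set of positive measure. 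Establishing this non-degeneracy of the Euler-characteristic increment is the most delicate point: a direct geometric analysis should work for moderate intensities, but in high-density regimes a cylinder $\mathrm{Cyl}(0,v)$ is a.s.\ fully covered by $Z(\xi)$ and the first chaos vanishes, so a fallback argument is required. In that case I would partition $[-\tfrac{s}{2}, \tfrac{s}{2}]^d$ into cubes of side length $\Theta(R)$ with pairwise disjoint $R$-neighbourhoods, re-randomise $\xi$ inside a single cube conditionally on the outside, and exploit the vanishing of $\mathrm{D}^2_{x,y}\chi$ for $\|p-q\| > R$ to combine the per-cube contributions additively into a linear lower bound on $\VV[\chi(Z(\xi) \cap W_s)]$.
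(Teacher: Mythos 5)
Your treatment of conditions (A), (C) and (D) is sound and matches the paper's: the same locality radius $R=2(R_h+r)$, the same identity $\mathrm{D}_x\chi = \chi(\Cyl(x)\cap W_s)-\chi(Z(\xi)\cap\Cyl(x)\cap W_s)$, and the observation that $\mathrm{D}^2_{x,y}\chi$ vanishes once the two cylinders are disjoint. (Your inclusion--exclusion bound $|\chi|\le 2^{N_x}$ with $\EE[16^{N_x}]<\infty$ for the Poisson variable $N_x$ is in fact a safer way to get (C) than the paper's count-based bound, since in dimension $d+1\ge 3$ the Euler characteristic of a union of convex sets need not be dominated by the number of sets.) The genuine gap is in (B). Your primary route via the first-chaos bound $\VV[F]\ge\int(\EE[\mathrm{D}_xF])^2\,\Lambda(\mathrm{d}x)$ cannot prove the theorem as stated: the theorem holds for every intensity $\gamma\in(0,\infty)$ and every direction measure $\QQ$ (including $\QQ=\delta_{v_0}$), and $\EE[\chi(Z(\xi)\cap\Cyl(0,v))]$ is a continuous function of $\gamma$ that starts at $0$ and tends to $1$, so there are admissible parameter values at which the first-chaos coefficient vanishes identically. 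You recognize this yourself, but the fallback you then sketch is exactly where the real work lies, and you leave out its essential ingredient.

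What is missing is the explicit non-degeneracy construction. The paper (reusing the argument from the isolated-nodes theorem, which is why the hypothesis $s\ge 6(R_h+r)$ appears) tiles $[-\frac{s}{2},\frac{s}{2}]^d$ by boxes $S_i$ of side $6(R_h+r)$, each containing a central box $C_i$ of side $2(R_h+r)$, and conditions on the event $A_i=\{\eta(C_i)=2,\ \eta(S_i\setminus C_i)=0\}$. On $A_i$ the two cylinders based in $C_i$ cannot meet any other cylinder of the model, so by additivity of $\chi$ their contribution is $\Delta_{X_{i,1},X_{i,2}}\chi = 2\cdot\1(\Cyl(X_{i,1})\cap\Cyl(X_{i,2})=\emptyset)+\1(\Cyl(X_{i,1})\cap\Cyl(X_{i,2})\neq\emptyset)\in\{1,2\}$, whose conditional variance is $p(1-p)$ with $p=\PP[\Cyl(X_{i,1})\cap\Cyl(X_{i,2})=\emptyset]\in(0,1)$ --- strictly positive uniformly in $i$ and $s$. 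The law of total variance then gives $\VV[\chi(Z(\xi)\cap W_s)]\ge c\sum_i\PP[A_i]$ with $|I|=(s/(6(R_h+r)))^d$ boxes, which is the linear lower bound (B). Without exhibiting such a local configuration on which the conditional increment of $\chi$ is genuinely two-valued with probabilities bounded away from $0$ and $1$, "combine the per-cube contributions additively" is only a plan, not a proof; so as written your argument does not establish (B), and hence not the theorem.
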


\subsection{Proof for the Covered Volume}
We start with the proof of the limit theorem concerning the covered volume of $Z(\xi)$ in $W_s$.

\begin{proof}[Proof of Theorem \ref{Theorem_TBC_volume_limit_theorem}]
	We aim to use Lemma \ref{Lemma_Apply_Malliavin_Stein_Bound} with
	\begin{equation*}
		f = \lambda_{d+1}.
	\end{equation*}
	Clearly $\lambda_{d+1}(A)<\infty$ for all $A \in \mathcal{C}^{d+1}$. Next, note that for $x = (p,v) \in \RR^d \times \MM$ we $\PP \text{-a.s.}$ have $\mathrm{D}_x\lambda_{d+1}(Z(\xi) \cap W_s) = 0$ for $|p| > R_h + s$ since in this case $\mathrm{Cyl}(x) \cap W_s = \emptyset$. If we set further $y = (q,w) \in \RR^d \times \MM$ we have $\mathrm{D}_{x,y}^2\lambda_{d+1}(Z(\xi) \cap W_s) = 0$ for $|p - q| > 2(R_h+r)$ $\PP \text{-a.s.}$ since $\mathrm{Cyl}(x) \cap \mathrm{Cyl}(y) = \emptyset$:
	\begin{align*}
		\mathrm{D}_{x,y}^2\lambda_{d+1}(Z(\xi) \cap W_s) &= \mathrm{D}_x \mathrm{D}_y ~\lambda_{d+1}(Z(\xi)\cap W_s)\\
		&= \mathrm{D}_x \bigl( \lambda_{d+1}(Z(\xi+\delta_y)\cap W_s) - \lambda_{d+1}(Z(\xi)\cap W_s)\bigr)\\
		&= \mathrm{D}_x \lambda_{d+1}(Z(\xi+\delta_y)\cap W_s) - \mathrm{D}_x \lambda_{d+1}(Z(\xi)\cap W_s) = 0
	\end{align*}
	$\PP \text{-a.s.}$. Thus conditions (A) and (D) are fulfilled and we have $c_3 = 0$. Condition (C) is next. The maximum volume of any cylinder is that of one with maximal scope, see Definition \ref{Defininition_Cylinder_Scope}. Since the scope of a cylinder $\Cyl(p,v)$ with $v \in \MM$ is only dependent on the last entry of $v$,  without loss of generality we can choose $v_h= (\sqrt{1-h^2},0,...,0,h)$ as a cylinder with maximum scope and receive
	\begin{equation}\label{Equation_Volume_Maximum_Cylinder}
		\lambda_{d+1}\big(\mathrm{Cyl}(p,v)\big) \leq \lambda_{d+1}\big(\mathrm{Cyl}(0,v_h)\big) \leq \frac{T \cdot r^d \kappa_d}{h}
	\end{equation}
	for all $(p,v) \in \RR^d \times \MM$. When we add a cylinder to the model, in the extreme case it becomes a subset of $W_s$ and intersects with none of the preexisting cylinders. In that case it adds the entirety of its volume to $\lambda_{d+1}(Z(\xi) \cap W_s)$. We can conclude
	\begin{equation*}
		\mathrm{D}_{(p,v)} \lambda_{d+1}(Z(\xi) \cap W_s) \leq \lambda_{d+1}\big(\mathrm{Cyl}(p,v)\big) \quad \PP \text{-a.s.}
	\end{equation*}
	and thus from \eqref{Equation_Volume_Maximum_Cylinder} that
	\begin{equation*}
		\EE\big[|\mathrm{D}_{(p,v)} \lambda_{d+1}(Z(\xi) \cap W_s)|^4\big]  \leq \Big(\frac{T \cdot r^d \kappa_d}{h}\Big)^4 =: c_2
	\end{equation*}
	and thus get condition (C). It remains to prove the lower bound on the variance. An application of Fubini's theorem yields
	\begin{align*}
		&\VV[\lambda_{d+1}(Z(\xi) \cap W_s)] \nonumber \\&= \EE\Big[\int_{W_s^2} \mathbbm{1}(x \in Z(\xi), y \in Z(\xi))~\lambda_{d+1}^2(\mathrm{d}(x,y)) \Big]  - \EE\Big[ \int_{W_s} \mathbbm{1}(x \in Z(\xi)) ~\lambda_{d+1}(\mathrm{d}x) \Big]^2 \nonumber \\
		&= \int_{W_s^2} \PP(x \in Z(\xi), y\in Z(\xi)) - \PP(x \in Z(\xi))\PP(y \in Z(\xi)) ~\lambda_{d+1}^2(\mathrm{d}(x,y)) \nonumber \\
		&= \int_{W_s^2} \PP(x \notin Z(\xi), y \notin Z(\xi)) - \PP(x \notin Z(\xi))\PP(y \notin Z(\xi)) ~\lambda_{d+1}^2(\mathrm{d}(x,y)),
	\end{align*}
	where the last equality follows from an application of DeMorgan's laws. Lemma \ref{Lemma_TBC_Capacity Functional} now yields
	\begin{align*}
		\PP(x \notin Z(\xi), y\notin Z(\xi)) &= \PP\big(\{x,y\} \cap Z(\xi) = \emptyset\big)\\ &= \exp\big(-\gamma \cdot \int_{\MM} \lambda_d\big(\overline{\{x,y\}_v} + \mathbb{B}_d(r)\big) ~\QQ(\mathrm{d}v)\big).
	\end{align*}
	To estimate this probability, consider some $x,y \in W_s$ and assume $\|x-y\| \leq r$. Then, the distance of their respective $v$-shadows satisfies the same, i.e. $\|\overline{x}_v - \overline{y}_v\| \leq r$ for $\overline{x}_v := \overline{\{x\}}_v$ and $\overline{y}_v := \overline{\{y\}}_v$. Also denote by $H_{w}^d(z)$ the cap of a $d$-dimensional hypersphere with radius $z$ and height $w$. Then
	\begin{equation*}
		\lambda_d(\mathbb{B}_d(r)) \leq \lambda_{d}\Big(\big(\overline{\{x\}_v} + \mathbb{B}_d(r)\big) \cup \big(\overline{\{y\}_v} + \mathbb{B}_d(r)\big)\Big) \leq 2\lambda_d(\mathbb{B}_d(r)) - 2\lambda_{d}\big(H_{\frac{r}{2}}^d(r)\big).
	\end{equation*}
	Since $e^x>1$ for all $x>0$ we get
	\begin{align*}
		&\PP(x \notin Z(\xi), y\notin Z(\xi)) - \PP(x \notin Z(\xi))\PP(y \notin Z(\xi))\\
		&\geq \exp\big(-2\gamma \lambda_d(\mathbb{B}_d(r))+2\gamma\cdot \lambda_d(H_{\frac{r}{2}}^d(r))\big) - \exp\big(-2 \gamma \lambda_d(\mathbb{B}_d(r))\big) =: \tau > 0
	\end{align*}
	for all $x \in \mathbb{B}_{d+1}(y,r)$ and $y \in W_s$. Then
	\begin{align*}
		&\VV[\lambda_{d+1}(Z(\xi) \cap W_s)]\nonumber \\
		&\geq \int_{W_s} \int_{\mathbb{B}_{d+1}(y,r) \cap W_s} \!\!\!\PP(x \notin Z(\xi), y \notin Z(\xi)) - \PP(x \notin Z(\xi))\PP(y \notin Z(\xi)) ~\lambda_{d+1}(\mathrm{d}x) \lambda_{d+1}(\mathrm{d}y) \nonumber \\
		&\geq \int_{W_s} \int_{\mathbb{B}_{d+1}(y,r) \cap W_s} \tau ~\lambda_{d+1}(\mathrm{d}x)~\lambda_{d+1}(\mathrm{d}y)\\
		&= \tau \int_{W_s} \lambda_{d+1}\Big(\mathbb{B}_{d+1}\left(y,r\right) \cap W_s\Big) ~\lambda_{d+1}(\mathrm{d}y).
	\end{align*}
	\noindent For $y \in W_s$ we also have that
	\begin{align*}
		\lambda_{d+1}\Big(\mathbb{B}_{d+1}\left(y,r\right) \cap W_s\Big) &\geq \frac{1}{2^{d+1}} \cdot \lambda_{d+1}(\mathbb{B}_{d+1}\big(0,\min\{r, T\}\big)) \\
		&= \frac{\kappa_{d+1} \cdot \min\{r, T\}^{d+1}}{2^{d+1}},
	\end{align*}
	where we have equality if $y$ sits just in a corner of the observation window $W_s$. For instance in the case $d=1$, $r<T$ and $y=\left(-\frac{s}{2},T\right)$ one quarter of $\mathbb{B}_{2}\left(y,r\right)$ intersects with $W_s$.  Now we can derive the bound
	\begin{align*}
		&\VV[\lambda_{d+1}(Z(\xi) \cap W_s)] \geq \tau \int_{W_s} \frac{\kappa_{d+1} \cdot \min\{r, T\}^{d+1}}{2^{d+1}} ~\lambda_{d+1}(\mathrm{d}y)\\
		&= \frac{\tau \cdot \kappa_{d+1} \cdot \min\{r, T\}^{d+1}}{2^{d+1}} \cdot \lambda_{d+1}(W_s) =: c_1 \cdot \lambda_{d+1}(W_s).
	\end{align*}
	This shows condition $\mathrm{(B)}$ of Lemma \ref{Lemma_Apply_Malliavin_Stein_Bound} and concludes the proof.
\end{proof}

Taking up Remark \ref{Remark_TBC_Upper_Bound_Variance}, we note that both lower and upper bound on the variance are of the same order.

\subsection{Proof for the Isolated Nodes}
Next we deal with the limit theorem for the isolated cylinders.

\begin{proof}[Proof of Theorem \ref{Theorem_TBC_isolated_nodes_limit_theorem}]
	As we did for the covered volume, we aim to apply Lemma \ref{Lemma_Apply_Malliavin_Stein_Bound} to the function
	\begin{equation*}
		\mathrm{Iso}_{Z(\xi)}: \xi \mapsto n \in \NN
	\end{equation*}
	counting the number of isolated cylinders created by $\xi$. Since the Poisson process of basepoints $\eta$ has finite intensity, it holds that $\mathrm{Iso}_{Z(\xi)}(A)< \infty$ $\PP$-a.s. for all $A \in \mathcal{C}^{d+1}$. Next we verify condition (A) and (D). Set $R = 2(R_h + r)$ and $c_3 =0$. Since two cylinders can only intersect if the distance between their basepoints is smaller than $R$, both conditions are satisfied. This argument follows analogously to our reasoning in the proof of Theorem \ref{Theorem_TBC_volume_limit_theorem}.
	For the same reason, we can bound the first order difference operator by
	\begin{equation*}
		|\mathrm{D}_{x}\mathrm{Iso}_{Z(\xi)}(W_s)| \leq \eta\big(\mathbb{B}_d(0,R)\big) \quad \PP \text{-a.s.}.
	\end{equation*}
	Set $a := \EE\big[\eta\big(\mathbb{B}_d(0,R)\big)\big] = \gamma \cdot \lambda_d\big(\mathbb{B}_ d(0,R)\big)$ and $x\in \RR^d \times \MM$. The number of isolated cylinders that can be connected by adding $\mathrm{Cyl}(x)$ to the model is limited by the total number of cylinders within maximum range. By using the fourth moment of the Poisson distributed random variable ${\eta\big(\mathbb{B}_d(0,R)\big)}$, we get
	\begin{equation}\label{iso4}
		\EE[\mathrm{D}_{x}\mathrm{Iso}_{Z(\xi)}(W_s)^4] \leq a^4 + 6a^3 + 7a^2 + a.
	\end{equation}
	Thus condition (C) is satisfied with $c_2 := a^4 + 6a^3 + 7a^2 + a + 1$. To verify (B), we use a strategy applied in \cite{Reitzner2005} and disassemble $\left[-\frac{s}{2}, \frac{s}{2}\right]^d$ into boxes of side length $6(R_h+r)$. Without loss of generality we can assume $s = n \cdot 6(R_h+r)$ for some $n \in \NN$, as the area outside these boxes is of no consequence to what follows. We index these sets by indices collected in the set $I$. For each $i \in I$ consider $C_i$, a box of side length $2(R_h + r)$ in the center of the corresponding greater box $S_i$ of side length $6(R_h + r)$. We define the event
	\begin{equation*}
		A_i = \{\eta(C_i)=2, ~\eta(S_i \backslash C_i)=0\}.
	\end{equation*}
	Let the $\sigma$-algebra $\mathcal{F}$ contain information on basepoints and directions of all cylinders of $Z(\xi)$, except those with basepoint in a square $S_i$ with $\1(A_i) = 1$. Now we decompose the variance
	\begin{align}\label{VarianceDecompIso}
		\VV[\mathrm{Iso}_{Z(\xi)}(W_s)] &= \EE[\VV[\mathrm{Iso}_{Z(\xi)}(W_s) ~|~ \mathcal{F}]] + \VV[\EE[\mathrm{Iso}_{Z(\xi)}(W_s) ~|~ \mathcal{F}]]\nonumber\\
		& \geq \EE[\VV[\mathrm{Iso}_{Z(\xi)}(W_s) ~|~ \mathcal{F}]]\nonumber\\
		& = \EE\left[\sum_{\1(A_i)=1} \VV_{X_{i,1},X_{i,2}}[\mathrm{Iso}_{Z(\xi)}(W_s)] \right]
	\end{align}
	where the right hand side variance is taken with respect only to the two random elements $X_{i,1},X_{i,2} \in \RR^d \times \MM$, created by the event $A_i$. Since these two cylinders can not intersect with any other cylinders of the model, the number of isolated nodes outside $A_i$ is independent of that contributed by $X_{i,1}$ and $X_{i,2}$. Let $\Delta_{X_{i,1},X_{i,2}}\mathrm{Iso}_{Z(\xi)}(W_s)$ denote that contribution. Note that
	\begin{equation}\label{Equation_Isolated_Cylinders_X1_X2_Contribution_Variance}
		\Delta_{X_{i,1},X_{i,2}}\mathrm{Iso}_{Z(\xi)}(W_s) = 2\cdot \1(\Cyl(X_{i,1}) \cap \Cyl(X_{i,2}) = \emptyset).
	\end{equation}
	This yields
	\begin{align*}
		\lefteqn{\VV_{X_{i,1},X_{i,2}}[\mathrm{Iso}_{Z(\xi)}(W_s)] }\\
		&= \VV_{X_{i,1},X_{i,2}}[\mathrm{Iso}_{Z(\xi - \delta_{X_{i,1}} - \delta_{X_{i,2}})}(W_s) + \Delta_{X_{i,1},X_{i,2}}\mathrm{Iso}_{Z(\xi)}(W_s)] \\
		&= \VV_{X_{i,1},X_{i,2}}[\Delta_{X_{i,1},X_{i,2}}\mathrm{Iso}_{Z(\xi)}(W_s)]\\
		&= 4 \cdot \PP[\Cyl(X_{i,1}) \cap \Cyl(X_{i,2}) = \emptyset] \cdot \PP[\Cyl(X_{i,1}) \cap \Cyl(X_{i,2}) \neq \emptyset] 
		=: c_4,
	\end{align*}
	since both probabilities clearly are strictly positive. Using this in \eqref{VarianceDecompIso} together with the stationarity and independence properties of $\eta$ as well as $|I| = \left(\frac{s}{6(R_h+r)}\right)^d$ we have
	\begin{align*}
		\VV[\mathrm{Iso}_{Z(\xi)}(W_s)] &\geq c_4 \cdot \sum_{i \in I} \PP[A_i] \\
		&\geq c_4 \cdot \sum_{i \in I} \PP[\eta(C_i)=2] \cdot \PP[\eta(S_i \backslash C_i)=0]\\
		&\geq c_4 \cdot \sum_{i \in I} c_5 = c_6 \cdot \left(\frac{s}{6(R_h+r)}\right)^d \\
		&= \frac{c_6}{6^d T(R_h+r)^d} \lambda_{d+1}(W_s)
	\end{align*}
	and thus condition (B). In the last equality we have used that $\lambda_{d+1}(W_s) = s^d \cdot T$.
\end{proof}

Again, the lower bound matches the order of the upper bound as given in Remark \ref{Remark_TBC_Upper_Bound_Variance}.

\subsection{Proof for the Euler Characteristic}

It remains to prove the limit theorem for the Euler characteristic.

\begin{proof}[Proof of Theorem \ref{Theorem_TBC_Euler_characteristic_limit_theorem}]
	We aim to use Lemma \ref{Lemma_Apply_Malliavin_Stein_Bound} on $f_{(\cdot)}$ for $f_{\xi}: \mathcal{C}^{d+1} \to \ZZ$ with
	\[
	f_{\xi}(A)=\left\{\begin{array}{ll} \chi(Z(\xi) \cap A), & \text{if $A  \in \mathcal{R}^{d+1}$}  \\
		0, & \text{else.}\end{array}\right.
	\]
	Since $\eta$ has finite intensity, the properties of $\chi$ imply that $f(A) < \infty$ for all $A \in \mathcal{C}^{d+1}$ $\PP$-a.s.. Once more we start by checking conditions (A) and (D). Again, set $R = 2(R_h+r)$ and $c_3 = 0$. As $\mathrm{Cyl}(p,v)\cap W_s = \emptyset$ for any $(p,v) \in \RR^d \times \MM$ with $|p| \geq R+s$, condition (A) is satisfied.\\
	 Also, for $(q,w)\in \RR^d \times \MM$, $\|p-q\| \geq R$ implies $\Cyl(p,v)\cap \Cyl(q,w) = \emptyset$ and thus $\mathrm{D}_{(p,v)}(\chi(Z(\xi+\delta_{(q,w)})\cap W_s)) - \mathrm{D}_{(p,v)}(\chi(Z(\xi)\cap W_s)) =0$ $\PP \text{-a.s.}$.
	To verify (C) we note that for $x\in \RR^d \times \MM$ the definition of $\chi$ gives us
	\begin{align}\label{DiffOpEuler}
		\mathrm{D}_{x} \chi(Z(\xi) \cap W_s) &= \chi\big(Z(\xi+\delta_x) \cap W_s\big) - \chi(Z(\xi) \cap W_s)\\
		&= \chi\big(\left(Z(\xi) \cup \Cyl(x)\right) \cap W_s\big) - \chi(Z(\xi) \cap W_s)\nonumber\\
		&= \chi\big((Z(\xi) \cap W_s) \cup (\Cyl(x)\cap W_s)\big) - \chi(Z(\xi) \cap W_s)\nonumber\\
		&= \chi\big(\mathrm{Cyl}(x) \cap W_s\big) - \chi\big(Z(\xi) \cap \mathrm{Cyl}(x) \cap W_s \big) \quad \PP \text{-a.s.}.\nonumber
	\end{align}
	Furthermore, it holds true that $\chi\big(\mathrm{Cyl}(x) \cap W_s\big) = \1(\mathrm{Cyl}(x) \cap W_s \neq \emptyset)$ and since $\chi\big(Z(\xi) \cap \mathrm{Cyl}(x) \cap W_s \big)$ counts the number of disjoint intersections of $\mathrm{Cyl}(x) \cap W_s$ with cylinders of $Z(\xi)$ we also have
	\begin{align*}
		\chi\big(Z(\xi) \cap \mathrm{Cyl}(x) \cap W_s \big) &\leq \sum_{y \in \xi} \mathbbm{1}\big(\mathrm{Cyl}(y) \cap \mathrm{Cyl}(x) \neq \emptyset\big)\leq \eta\big(\mathbb{B}_d(0,R)\big) \quad \PP \text{-a.s.}.
	\end{align*}
	To this, we have to add the number of cylinders added by the difference operators emerging in condition (C). Thus the condition is fulfilled with
	\begin{equation*}
		c_2 := \EE[(2+\eta\big(\mathbb{B}_d(0,R)\big))^4].
	\end{equation*}
	To verify (B) we use the same strategy as in the proof of Theorem \ref{Theorem_TBC_isolated_nodes_limit_theorem} and now have to consider $\VV_{X_{i,1},X_{i,2}}[\chi(Z(\xi) \cap W_s)]$. Again, since the cylinders created by $X_{i,1}$ and $X_{i,2}$ can not intersect, we can use the additivity of the Euler characteristic to get
	\begin{align*}
		&\VV_{X_{i,1},X_{i,2}}[\chi(Z(\xi) \cap W_s)]\\
		&\quad= \VV_{X_{i,1},X_{i,2}}[\chi(Z(\xi - \delta_{X_{i,1}} - \delta_{X_{i,2}}) \cap W_s) + \Delta_{X_{i,1},X_{i,2}}\chi(Z(\xi) \cap W_s)]\\
		&\quad= \VV_{X_{i,1},X_{i,2}}[\Delta_{X_{i,1},X_{i,2}}\chi(Z(\xi) \cap W_s)].
	\end{align*}
	Since $\Delta_{X_{i,1},X_{i,2}}\chi(Z(\xi) \cap W_s) \in \{1,2\}$ we get
	\begin{align*}
		&\VV_{X_{i,1},X_{i,2}}[\chi(Z(\xi) \cap W_s)]\\
		&\quad =5 \cdot \PP[\Cyl(X_{i,1}) \cap \Cyl(X_{i,2}) = \emptyset] \cdot \PP[\Cyl(X_{i,1}) \cap \Cyl(X_{i,2}) \neq \emptyset].
	\end{align*}
	Thus condition (B) follows as it did for Theorem \ref{Theorem_TBC_isolated_nodes_limit_theorem}.
\end{proof}

Note that, again, the order of the lower bound on the variance coincides with that of the upper bound found in Remark \ref{Remark_TBC_Upper_Bound_Variance}.

\section{Stacked Time Bounded Cylinders}\label{Section_TBC_Stacked_Cylinders}

\begin{figure}
	\begin{center}
		\includegraphics[scale=0.5]{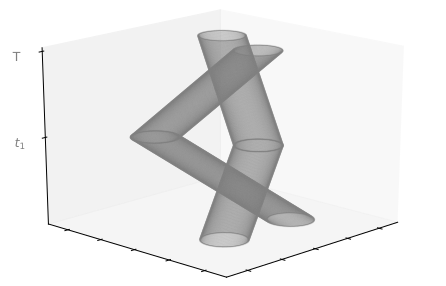}
	\end{center}
	\caption{An excerpt of the TBC model in the expanded setting.}
\end{figure}

Up to this point our model supports movements in one direction only. To allow for the nodes to change direction we iterate the marking process, giving a node a new direction vector at fixed points in time. These times can be given deterministically or randomly, but independent of all other random variables. In view of application, one could think of a deterministic setting like a daily routine or an exponentially distributed random variable for the time distances.

\subsection{Construction of the Cylinderstacks}

Set $t_0 := 0$, $t_{K+1} :=T$ and let $T_{K} = (t_0,t_1,t_2,\dots,t_{K+1})$, for some $K\in\NN$, be a strictly monotone sequence of times in $[0,T]$.
Now recall the definition of a cylinder as given in Definition \ref{Definition_Cylinder}. We will now modify it to reflect the expanded setting. To a point $p \in \eta$ let $V = (v_{0},\dots,v_{K})$ be the vector of directions taken, that is $v_{i} \in \MM$ for $i \in \{0,\dots,K\}$. Consider the projection $\pi: \MM \to [0,1]$ onto the velocity, that is $\pi(v) = v_{d+1}$ for $v\in \MM$, and recall that $\hat{p}_{0} \in \RR^{d+1}$ is the vector $p$ but with an additional coordinate with value zero. Then
\begin{equation*}
	\hat{p}_{k} := \hat{p}_{0} + \sum_{j=0}^{k-1} \frac{|t_{j+1} - t_j|}{\pi(v_{j})} v_{j}.
\end{equation*}
In the context of random networks, think of this point as the location at time $t_k$ of a node starting its journey in $p$. We are now in a position to modify Definition \ref{Definition_Cylinder} for this expanded setting.
\begin{definition}\label{Definition_Cylinderstack}
	Given a radius $r \geq 0$ and $T_K$ as above, the values of the function
	\begin{align*}
		\mathrm{Cyl}_{r,T_K}:~\RR^{d+1} \times (\MM)^K &\to \mathcal{C}^{d+1}\\
		(p,V) &\mapsto \bigcup_{k=0}^K \bigcup_{u \in [0,1]} \left( \Big(\hat{p}_k + \frac{u |t_{k+1}-t_{k}|}{\pi(v_{k})} \cdot v_{k}\Big) + \mathbb{B}_d(r)\right)
	\end{align*}
	are the \textbf{time bounded cylinderstacks}.
\end{definition}

Our idea is that at every time $t \in T_K$ a cylinder has the opportunity to change direction and will do so with probability $q \in [0,1]$. A new direction is always drawn with respect to the probability measure $\QQ$ introduced in section \ref{Section_Construction_of_Cylinders}. Thus the aforementioned vector $V = (v_{0},\dots,v_{K})\in (\MM)^K$ is drawn with respect to the probability measure
\begin{equation*}
	\QQ_K := \QQ \otimes (1-q)\delta_{v_0} + q\QQ \otimes \dots \otimes(1-q)\delta_{v_{K-1}} + q\QQ.
\end{equation*}
We can now give analogues to Definition \ref{Definition_Process_of_Cylinders} and Lemma \ref{Lemma_Xi_is_a_Poisson_Point_Process}.
\begin{definition}\label{Definition_Process_of_Cylinderstacks}
	Let $\eta$ be defined as in Section \ref{Section_Construction_of_Cylinders} and $\QQ_K$ as above. Mark every point in the support of $\eta$ with a direction from $\MM$, randomly with respect to $\QQ_K$. The resulting marked point process $\mathbf{\xi}_K$ defined on $\RR^d \times (\MM)^K$ is called the \textbf{process of time bounded cylinderstacks}.
\end{definition}

\begin{lemma}\label{Lemma_Xi_K_is_a_Poisson_Point_Process}
	$\xi_K$ is a Poisson point process with intensity measure $\Lambda_{K} = \gamma\lambda_d \otimes \QQ_K$.
\end{lemma}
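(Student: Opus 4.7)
The plan is to mirror the proof of Lemma \ref{Lemma_Xi_is_a_Poisson_Point_Process} almost verbatim, since structurally nothing has changed: we still have an underlying stationary Poisson process $\eta$ on $\RR^d$ with intensity $\gamma$, and we are attaching i.i.d.\ marks to its points. The only modification is that the mark space has grown from $\MM$ to $(\MM)^K$ (encoding the direction at each phase of the journey), and the mark distribution is the probability measure $\QQ_K$ rather than $\QQ$.

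The first step is to verify that $\QQ_K$, as written, is indeed a probability measure on $(\MM)^K$. This amounts to unpacking the recursive mixture definition: at each time $t_k$, with probability $1-q$ the mark in coordinate $k$ is forced to equal $v_{k-1}$ (the $\delta_{v_{k-1}}$ factor), and with probability $q$ it is drawn afresh from $\QQ$. Since $\QQ$ is a probability measure and $(1-q) + q = 1$ at every step, $\QQ_K$ is a well-defined probability kernel on $(\MM)^K$, and crucially it does not depend on the basepoint $p$ at which the mark is attached. It is also drawn independently for each basepoint by the construction in Definition \ref{Definition_Process_of_Cylinderstacks}.

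With these two properties in hand — the mark distribution is a proper probability measure on $(\MM)^K$ and the marks are drawn i.i.d.\ across basepoints independently of $\eta$ — the construction is precisely an independent $\QQ_K$-marking in the sense of \cite[Definition 5.3]{Last2018}. Applying the Marking Theorem \cite[Theorem 5.6]{Last2018} to $\eta$ (a Poisson process with intensity measure $\gamma \lambda_d$) yields that $\xi_K$ is a Poisson point process on $\RR^d \times (\MM)^K$ with intensity measure $\gamma \lambda_d \otimes \QQ_K = \Lambda_K$, as asserted. There is no real obstacle here; the only mild care needed is to confirm that the mixture form of $\QQ_K$ really gives a probability measure so that the Marking Theorem applies off the shelf.
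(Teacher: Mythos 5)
Your proposal is correct and follows essentially the same route as the paper: the paper's proof simply observes that $\QQ_K$ is independent of $\eta$, so $\xi_K$ is again an independent marking, and invokes the Marking Theorem exactly as in Lemma \ref{Lemma_Xi_is_a_Poisson_Point_Process}. Your additional check that the mixture form of $\QQ_K$ defines a genuine probability measure on $(\MM)^K$ is a sensible (if brief) elaboration the paper leaves implicit, and it matches the paper's supplementary thinning-and-remarking discussion in spirit.
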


\begin{proof}
	Since $\QQ_K$ is independent of the Poisson process $\eta$ we have an independent marking again and thus the statement follows as it did for Lemma \ref{Lemma_Xi_is_a_Poisson_Point_Process}.
\end{proof}

That the process of time bounded cylinderstacks preserves the Poisson properties of the original point process seems remarkable, but can be intuitively explained by using thinnings. Since for $t_1\geq T$ the model remains unchanged let us consider $t_1<T$. We start with the point process $\xi$ as introduced in Definition \ref{Definition_Process_of_Cylinders}. From Lemma \ref{Lemma_Xi_is_a_Poisson_Point_Process} we know that $\xi$ is a Poisson point process. Now we take a $q$-thinning of $\xi$, namely a Bernoulli experiment with parameter $q$ which decides for each point of the Poisson process if it belongs to $\xi(q)$ or $\xi-\xi(q)$. The Thinning Theorem \cite[Corollary 5.9]{Last2018} implies that both $\xi(q)$ and $ \xi-\xi(q)$ are Poisson processes.\\ For the points in $\xi(q)$ we now choose a new direction and by adding this into another component of the marking, we get a new marked point process which we call $\xi(q)^M$. Analogously to the single direction case the Marking Theorem implies that this process is Poisson. For points in $\xi-\xi(q)$ the direction stays the same, nevertheless, as for $\xi(q)$ we add the previous direction as a new marking and again obtain a marked point process $\xi(1-q)^M$. As a superposition of two Poisson processes $\xi_1:=\xi(q)^M \oplus \xi(1-q)^M$ is also Poisson.
We proceed in thinning, marking and gluing together for each time step $t_i<T$ and obtain the Poisson process of cylinderstacks. We can now conveniently define the expanded model as follows.

\begin{definition}\label{Definition_Stacked_TBC_model}
	With the point process $\xi_K$ given as above, the \textbf{stacked time bounded cylinder (sTBC) model} is the random set
	\begin{equation*}
		Z(\xi_K) := \bigcup_{(p,V) \in \xi_K} \mathrm{Cyl}(p,V).
	\end{equation*}
	In this notation we identify the random measure $\xi_K$ with its support.
\end{definition}

With these definitions we can now rephrase Lemma \ref{Lemma_Apply_Malliavin_Stein_Bound} to work with our modifications.

\begin{lemma}\label{Lemma_Apply_Malliavin_Stein_Bound_sTBC}
	Let $f: \mathcal{C}^{d+1} \to \RR$, $s\geq1$ and $W_s = [-\frac{s}{2}, \frac{s}{2}]^{d} \times [0,T]$. Assume that $|f(A)|<\infty$ for all $A \in \mathcal{C}^{d+1}$ and that there exist constants $c_1$, $c_2$, $c_3$ and $R \in \RR$ such that the following conditions are met for all $x = (p,V), y= (q,W) \in  \RR^d \times (\MM)^K$:
	\begin{itemize}
		\item[$\mathrm{(A)}$] $\mathrm{D}_xf(Z(\xi_K) \cap W_s) = 0 \quad \PP \text{-a.s.}$ \text{for} $\|p\| > R+s$,
		\item[$\mathrm{(B)}$] $\VV\big[f(Z(\xi_K) \cap W_s)\big] \geq c_1 \cdot \lambda_{d+1}(W_s)$,\vspace*{0.1cm}
		\item[$\mathrm{(C)}$] $\max\{\EE\big[\mathrm{D}_{x}f(Z(\xi_K) \cap W_s)^4\big],~\EE\big[\mathrm{D}_{x}f(Z(\xi_K+\delta_y) \cap W_s)^4\big]\} \leq c_2$,\vspace*{0.1cm}
		\item[$\mathrm{(D)}$] $\|p - q\|>R \Rightarrow \EE[\big(\mathrm{D}_{x,y}^2 f(Z(\xi_K) \cap W_s)\big)^4] \leq \frac{c_3}{\lambda_{d+1}(W_s)^4}$.
	\end{itemize}\vspace*{0.25cm}
	Then, there is a constant $c \in \RR$ such that, for a standard normal random variable $N$,
	\begin{equation*}
		\mathrm{d_W}\Bigl(\frac{f(Z(\xi_K) \cap W_s)-\EE\big[f(Z(\xi_K) \cap W_s)\big]}{\sqrt{\VV\big[f(Z(\xi_K) \cap W_s)\big]}}, N\Bigr) \leq \frac{c}{\sqrt{\lambda_{d+1}(W_s)}}.
	\end{equation*}
\end{lemma}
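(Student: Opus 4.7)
The plan is to mirror the proof of Lemma \ref{Lemma_Apply_Malliavin_Stein_Bound} essentially verbatim, observing that the only structural change is the replacement of the mark space $\MM$ by $(\MM)^K$ and of the intensity measure $\Lambda = \gamma\lambda_d \otimes \QQ$ by $\Lambda_K = \gamma\lambda_d \otimes \QQ_K$. The crucial fact that makes this transfer painless is Lemma \ref{Lemma_Xi_K_is_a_Poisson_Point_Process}: since $\xi_K$ is again a Poisson point process, Theorem \ref{Theorem_LP18_Limit_Theorem} applies directly on the abstract measurable space $(\RR^d \times (\MM)^K, \mathcal{B}(\RR^d) \otimes \mathcal{B}((\MM)^K))$, with no modification needed to the Malliavin--Stein machinery.

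Concretely, I would set
\begin{equation*}
	g(\xi_K) := \frac{f(Z(\xi_K) \cap W_s) - \EE[f(Z(\xi_K) \cap W_s)]}{\sqrt{\VV[f(Z(\xi_K) \cap W_s)]}}
\end{equation*}
and verify the integrability hypothesis $\EE\big[\int (\mathrm{D}_x G)^2 \Lambda_K(\mathrm{d}x)\big] < \infty$ by using the linearity of the difference operator together with condition (B) to pass the variance into the denominator, and conditions (A) and (C) to bound the numerator and restrict the domain of integration to $\overline{W}_{R,s} \times (\MM)^K$. Then I would reproduce the bounds \eqref{Equation_First_Order_Difference_Operator_Bound_Condition_C}, \eqref{Equation_Second_Order_Difference_Operator_Bound_Condition_C} and \eqref{Equation_Second_Order_Difference_Operator_Bound_Condition_D}, which rely only on the linearity of $\mathrm{D}$, on the inequality $|a+b|^q \leq 2^{q-1}(|a|^q + |b|^q)$, and on conditions (B), (C), (D). None of these steps uses any specific property of the mark space beyond the fact that $\xi_K$ is Poisson with $\sigma$-finite intensity.

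The bounds for $\alpha_1(G)$, $\alpha_2(G)$, $\alpha_3(G)$ are then derived by the exact same decomposition of the domain into the four pieces determined by $A_z = \mathbb{B}_d(x_1,R) \times (\MM)^K$ and its complement in $\overline{W}_{R,s} \times (\MM)^K$, using condition (D) whenever the basepoint distance exceeds $R$ and condition (C) otherwise. Since the estimate \eqref{Equation_Volume_Observation_Window_Upper_Bound} on the volume of the enlarged window depends only on the spatial component $\RR^d$ and is untouched by the change of mark space, the constant $c_{d,R,T}$ is the same. Applying Theorem \ref{Theorem_LP18_Limit_Theorem} then gives the claimed Wasserstein bound.

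There is no genuine obstacle here; the proof is a copy of the one given for Lemma \ref{Lemma_Apply_Malliavin_Stein_Bound}, with the symbols $\xi$, $\Lambda$, $\MM$ replaced by $\xi_K$, $\Lambda_K$, $(\MM)^K$ throughout. What is worth emphasising in the write-up is the single input from the stacked construction that legitimises the substitution, namely Lemma \ref{Lemma_Xi_K_is_a_Poisson_Point_Process}, since everything else in the Malliavin--Stein estimate is insensitive to how the marks themselves are generated.
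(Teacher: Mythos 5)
Your proposal is correct and matches the paper exactly: the authors likewise dispose of this lemma by noting that the proof of Lemma \ref{Lemma_Apply_Malliavin_Stein_Bound} uses only the assumed bounds (A)--(D) and the Poisson property of the underlying process, so it carries over verbatim with $\xi$, $\Lambda$, $\MM$ replaced by $\xi_K$, $\Lambda_K$, $(\MM)^K$. Your explicit identification of Lemma \ref{Lemma_Xi_K_is_a_Poisson_Point_Process} as the sole input legitimising the substitution is exactly the right observation.
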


As the proof in section \ref{Section_Applying_the_Malliavin_Stein_Bound} relies only on the assumed bounds and not on how the model is constructed, we can use the same techniques and estimates we used there and prove this lemma analogously.

\subsection{Covered Volume}

As usual, let $W_s := [-\frac{s}{2}, \frac{s}{2}]^{d} \times [0,T]$. We will now study the covered volume of the sTBC model that is the random variable
\begin{equation*}
	\lambda_{d+1}(Z(\xi_K) \cap W_s).
\end{equation*}

As this is an additive function our strategy for working with this quantity will be to split the sTBC model into TBC models and evaluate the volume for each of them. To that end let $Z_t(\xi)$ denote the TBC model with time horizon $t \geq 0$. We will first apply our strategy when computing the expectation of the volume. Using \eqref{Equation_Probability_x_hits_TBC} and Fubini we have
\begin{align*}
	\EE[\lambda_{d+1}(Z(\xi_K) \cap W_s)] &= \EE\Big[\int_{W_s} \1(x \in Z(\xi_K)) ~\mathrm{d}x\Big]\\
	&= \EE\Big[\sum_{k=0}^{K} \int_{[-\frac{s}{2},\frac{s}{2}]^d \times [t_k, t_{k+1}]}\1(x \in Z_{|t_{k+1}-t_k|}(\xi))~\mathrm{d}x\Big]\\
	&= \EE\Big[\sum_{k=0}^{K} \EE\Big[\int_{[-\frac{s}{2},\frac{s}{2}]^d \times [t_k, t_{k+1}]}\1(x \in Z_{|t_{k+1}-t_k|}(\xi))~\mathrm{d}x ~|~ T_K\Big]\Big]\\
	&= (1 - \exp(-\gamma \cdot r^d \kappa_d)) \cdot \EE\left[\sum_{k=0}^{K} \lambda_{d+1}\left(\left[-\frac{s}{2},\frac{s}{2}\right]^d \times [t_k, t_{k+1}]\right)\right]\\
	&= (1 - \exp(-\gamma \cdot r^d \kappa_d)) \cdot \lambda_{d+1}(W_s).
\end{align*}
We see that the expected volume remains unchanged, which is unsurprising given the homogeneous nature of our model.
\vspace*{0.5cm}
\begin{theorem}\label{Theorem_TBC_Volume_Stacked}
	Let $N$ denote a standard normal random variable and assume $r<s$. There exists a constant $c \in \RR_+$ such that
	\begin{equation*}
		\mathrm{d}_W\Big(\frac{\lambda_{d+1}(Z(\xi_K) \cap W_s) - \EE[\lambda_{d+1}(Z(\xi_K) \cap W_s)]}{\sqrt{\VV[\lambda_{d+1}(Z(\xi_K) \cap W_s)]}}, N\Big) \leq \frac{c}{\sqrt{\lambda_{d+1}(W_s)}}.
	\end{equation*}
\end{theorem}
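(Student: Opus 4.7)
The plan is to apply Lemma \ref{Lemma_Apply_Malliavin_Stein_Bound_sTBC} to $f = \lambda_{d+1}$ and verify conditions (A)–(D), closely paralleling the proof of Theorem \ref{Theorem_TBC_volume_limit_theorem}. Throughout we set $c_3 = 0$ and take $R$ of order $R_h + r$.

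Conditions (A), (C) and (D) reduce to a single geometric observation: the horizontal scope of a cylinderstack remains bounded by $R_h = T/h$, since the total horizontal displacement is at most
\[
\sum_{k=0}^{K} \frac{|t_{k+1}-t_k|}{\pi(v_k)}\cdot\|(v_{k,1},\ldots,v_{k,d})\| \leq \sum_{k=0}^{K} \frac{t_{k+1}-t_k}{h} = \frac{T}{h} = R_h.
\]
Thus $\mathrm{Cyl}(p,V) \subset \mathbb{B}_d(p, R_h + r)\times [0,T]$, and two cylinderstacks whose basepoints are more than $2(R_h+r)$ apart cannot intersect. This gives (A) and (D) with $c_3 = 0$ by the same argument as in the proof of Theorem \ref{Theorem_TBC_volume_limit_theorem}. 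For (C), each cylinderstack has $(d+1)$-volume at most $(T r^d \kappa_d)/h$, summing the contributions of its $K+1$ tubular segments, so the crude bound $|\mathrm{D}_x f| \leq \lambda_{d+1}(\mathrm{Cyl}(x))$ yields $c_2 := (T r^d \kappa_d/h)^4$, exactly as before.

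The main step is the variance lower bound (B). The idea is to reduce, within each time slab $[t_k,t_{k+1}]$, to the TBC setting of Theorem \ref{Theorem_TBC_volume_limit_theorem}. A point $x \in W_s$ with $x_{d+1} \in [t_k,t_{k+1}]$ lies in $Z(\xi_K)$ if and only if it lies in the $k$-th segment of some cylinderstack. The collection of $k$-th segments is the image of $\xi_K$ under the measurable map $(p,V) \mapsto (\text{spatial component of } \hat{p}_k, v_k)$; since $\eta$ is translation invariant and the direction marks are drawn independently across basepoints, the displacement theorem and the marking theorem together imply that this image is a Poisson process on $\RR^d \times \MM$ with intensity $\gamma \lambda_d \otimes \tilde{\QQ}_k$, where $\tilde{\QQ}_k$ denotes the marginal law of $v_k$ under $\QQ_K$. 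In other words, the restriction of $Z(\xi_K)$ to the slab $\RR^d \times [t_k,t_{k+1}]$ is distributed as a TBC with time horizon $t_{k+1}-t_k$ and direction law $\tilde{\QQ}_k$. The capacity-functional computation from the proof of Theorem \ref{Theorem_TBC_volume_limit_theorem} then applies slab-by-slab and produces constants $\tau_k > 0$ such that
\[
\PP(x \notin Z(\xi_K),\, y \notin Z(\xi_K)) - \PP(x \notin Z(\xi_K))\PP(y \notin Z(\xi_K)) \geq \tau_k
\]
for all pairs $x,y$ lying strictly inside slab $k$ (i.e., with $x_{d+1},y_{d+1}$ bounded away from $t_k,t_{k+1}$) and with $\|x-y\|$ sufficiently small. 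Setting $\tau := \min_{k=0,\ldots,K} \tau_k > 0$ (well-defined since $K$ is finite and each slab has positive width by strict monotonicity of $T_K$), and integrating over $W_s$ via Fubini as in the proof of Theorem \ref{Theorem_TBC_volume_limit_theorem}, yields $\VV[\lambda_{d+1}(Z(\xi_K) \cap W_s)] \geq c_1 \lambda_{d+1}(W_s)$ for a constant $c_1 > 0$ depending only on $d, r, T, h, K$ and $T_K$.

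The main obstacle is the verification of (B), specifically the slab-by-slab identification of the restricted sTBC as a TBC and the careful handling of pairs $(x,y)$ that straddle slab boundaries (which contribute a correction of smaller order and can safely be discarded from the lower bound). Once (A)–(D) are established, Lemma \ref{Lemma_Apply_Malliavin_Stein_Bound_sTBC} immediately produces the claimed Wasserstein bound of order $1/\sqrt{\lambda_{d+1}(W_s)}$.
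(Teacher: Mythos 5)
Your proposal is correct and follows essentially the same route as the paper: conditions (A), (C), (D) via the unchanged scope bound $R_h+r$ and the volume bound $c_2=(Tr^d\kappa_d/h)^4$, and condition (B) by restricting the variance integral to pairs $x,y$ sharing a time slab, reusing the single-direction $\tau$-argument there, and absorbing the slab width into the constant (the paper packages this as $\EE[\min\{r,Y\}^{d+1}]$ with $Y=\min_k|t_{k+1}-t_k|$). Your explicit identification of the slab restriction as a TBC via the displacement and marking theorems is a slightly more detailed justification of the step the paper states as ``argue as in the single direction case,'' but it is not a different proof.
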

\vspace*{0.5cm}
\begin{proof}
	We use Lemma \ref{Lemma_Apply_Malliavin_Stein_Bound_sTBC} on the function $f=\lambda_{d+1}$. We can verify conditions (A) and (D) by the same observation we made in the proof of Theorem \ref{Theorem_TBC_volume_limit_theorem}, again using $R = 2(R_{h} + r)$ and $c_3=0$. The same is true for condition (C) which once more is satisfied with $c_2 := \Big(\frac{T \cdot r^d \kappa_d}{h}\Big)^4$.
	To verify (B) we have to make some adjustments but start as before with
	\begin{align*}
		&\VV[\lambda_{d+1}(Z(\xi_K) \cup W_s)]\\
		&\qquad= \int_{W_s^2} \PP(x \notin Z(\xi_K), y \notin Z(\xi_K)) - \PP(x \notin Z(\xi_K))\PP(y \notin Z(\xi_K)) ~\lambda_{d+1}^2(\mathrm{d}(x,y)).
	\end{align*}
	Given a point $y \in W_s$, let $t_-^y, t_+^y \in T_K$ such that $t_-^y \leq y_{d+1} \leq t_+^y$, $t_-^y \neq t_+^y$ and there is no element of $T_K$ in the interval $(t_-^y,t_+^y)$. We have
	\begin{align*}
		&\VV[\lambda_{d+1}(Z(\xi_K) \cap W_s)] \nonumber \\&= \int_{W_s^2} \PP(x \notin Z(\xi_K), y \notin Z(\xi_K)) - \PP(x \notin Z(\xi_K))\PP(y \notin Z(\xi_K)) ~\lambda_{d+1}^2(\mathrm{d}(x,y)) \\
		&\geq \int_{W_s} \int_{[-\frac{s}{2}, \frac{s}{2}]^d \times [t_-^y, t_+^y]} \PP(x \notin Z_{|t_+^y - t_-^y|}(\xi), y \notin Z_{|t_+^y - t_-^y|}(\xi))\\&\hspace{4.2cm}- \PP(x \notin Z_{|t_+^y - t_-^y|}(\xi))\PP(y \notin  Z_{|t_+^y - t_-^y|}(\xi)) ~\lambda_{d+1}(\mathrm{d}x)~\lambda_{d+1}(\mathrm{d}y).
	\end{align*}
	Note that the bound of the inner integral now ensures that $x$ and $y$ share a time slot in the stacked cylinder model. Because of this we can argue as in the single direction case and derive the constant $\tau>0$ such that  $p_{xy} - p_x p_y \geq \tau$ for all $x \in \mathbb{B}_{d+1}(y,\frac{r}{2}) \cap \big([-\frac{s}{2}, \frac{s}{2}]^d \times [t_-^y, t_+^y]\big)$ and $y\in W_s$. As before we then have
	\begin{align*}
		&\lambda_{d+1}\big(\mathbb{B}_d\big(y,\frac{r}{2}\big) \cap \big(\left[-\frac{s}{2}, \frac{s}{2}\right]^d \times [t_-^y, t_+^y]\big)\big)\\
		&\qquad \geq \frac{1}{2^{d+1}} \cdot \lambda_{d+1}(\mathbb{B}_{d+1}\big(0,\min\{\frac{r}{2}, t_+^y - t_-^y\}\big)) \\
		&\qquad = \frac{\kappa_{d+1} \cdot \min\{\frac{r}{2}, t_+^y - t_-^y\}^{d+1}}{2^{d+1}}
	\end{align*}
	and thus
	\begin{align*}
		&\VV[\lambda_{d+1}(Z(\xi_K) \cap W_s)]\\
		&\qquad \geq \tau \cdot \int_{W_s} \EE\big[\lambda_{d+1}(\mathbb{B}_d(y,r) \cap \big(\left[-\frac{s}{2}, \frac{s}{2}\right]^d \times [t_-^y, t_+^y]\big))\big] ~\lambda_{d+1}(\mathrm{d}(y))  \\
		&\qquad \geq \frac{\tau \cdot \kappa_{d+1} \cdot \EE[\min\{r, Y\}^{d+1}]}{2^{d+1}} \cdot \lambda_{d+1}(W_s) =: c_1 \cdot \lambda_{d+1}(W_s)
	\end{align*}
	with the random variable $Y := \underset{\text{$k$}}{\min}\{|t_{k+1} - t_{k}|\}$.
\end{proof}

\subsection{Isolated Cylinders}

We are now interested in the isolated stacks in the sTBC model. The point process of isolated cylinder stacks with basepoint in $A \subset \RR^d$ is given by
\begin{equation*}
	\mu(A) = \int_{A \times \big(\MM \big)^K} \mathbbm{1}\big(\mathrm{Cyl}(x) \cap Z(\xi_K - \delta_{x}) = \emptyset \big) ~\xi_K \big(\mathrm{d}x\big).
\end{equation*}

The next theorem shows asymptotic normality of $\mathrm{Iso}_{Z(\xi_K)}(W_s) = \mu(W_s)$.

\begin{theorem}\label{Theorem_TBC_Iso_Stacked}
	Let $N$ denote a standard normal random variable and assume $s \geq 6(R_h+r)$. There exists a constant $c \in \RR_+$ such that
	\begin{equation*}
		\mathrm{d}_W\Big(\frac{\mathrm{Iso}_{Z(\xi_K)}(W_s) - \EE[\mathrm{Iso}_{Z(\xi_K)}(W_s)]}{\sqrt{\VV[\mathrm{Iso}_{Z(\xi_K)}(W_s)]}}, N\Big) \leq \frac{c}{\sqrt{\lambda_{d+1}(W_s)}}.
	\end{equation*}
\end{theorem}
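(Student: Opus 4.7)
The plan is to mirror the proof of Theorem \ref{Theorem_TBC_isolated_nodes_limit_theorem}, now invoking Lemma \ref{Lemma_Apply_Malliavin_Stein_Bound_sTBC} in place of Lemma \ref{Lemma_Apply_Malliavin_Stein_Bound}, applied to the functional counting isolated cylinder stacks. The key geometric input is that, even though a stack may bend $K$ times, the sum of segment scopes still telescopes: for any $k \in \{0, \dots, K\}$ and $u \in [0,1]$, the displacement of $\hat{p}_k + \frac{u|t_{k+1}-t_k|}{\pi(v_k)} v_k$ from $\hat{p}_0$ in the first $d$ coordinates is bounded by $\sum_{j=0}^{K} \frac{|t_{j+1}-t_j|}{h} = R_h$. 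Consequently, two stacks with basepoints $p,q$ satisfying $\|p-q\| > R := 2(R_h + r)$ are disjoint almost surely, which yields conditions (A) and (D) (with $c_3 = 0$) by the same telescoping argument used in the TBC proof.

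For condition (C) I would again dominate the first-order difference operator by the number of cylinders whose basepoints lie within maximum reach: adding a stack at $x = (p,V)$ can at most change the isolation status of itself together with all stacks with basepoint in $\mathbb{B}_d(p,R)$. Thus $|\mathrm{D}_x \mathrm{Iso}_{Z(\xi_K)}(W_s)| \leq \eta(\mathbb{B}_d(0,R)) + 1$ almost surely (and similarly after adding a further point $y$), and the Poisson fourth moment formula as in \eqref{iso4} provides an explicit constant $c_2$. This part is completely analogous to the single direction setting since these bounds depend only on how many basepoints can possibly interact, not on how their marks evolve.

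The main obstacle is the variance lower bound (B). I would repeat the disjoint-box construction: partition $[-s/2, s/2]^d$ into disjoint $d$-cubes $S_i$ of side length $6(R_h+r)$ (each containing a smaller central cube $C_i$ of side $2(R_h+r)$), and define $A_i := \{\eta(C_i) = 2,\ \eta(S_i\setminus C_i) = 0\}$. Conditioning on the $\sigma$-algebra $\mathcal{F}$ carrying all information except the marks and positions of the two points in each realized $A_i$, the same conditional-variance decomposition \eqref{VarianceDecompIso} applies, and within an $A_i$-box the pair of stacks $X_{i,1}, X_{i,2}$ cannot interact with any other stack, so their contribution to $\mathrm{Iso}_{Z(\xi_K)}(W_s)$ is the analogue of \eqref{Equation_Isolated_Cylinders_X1_X2_Contribution_Variance}, namely $2 \cdot \1(\Cyl(X_{i,1}) \cap \Cyl(X_{i,2}) = \emptyset)$. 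What must be checked is that both $\PP[\Cyl(X_{i,1}) \cap \Cyl(X_{i,2}) = \emptyset]$ and its complement are strictly positive under the law $\QQ_K$ of a direction history; this is where the extra care for the stacked setting lies. Non-intersection has positive probability because with positive $\QQ_K$-probability both stacks repeatedly pick directions pointing away from each other (or the two basepoints are arranged inside $C_i$ so that disjointness occurs), whereas intersection has positive probability because with positive probability neither stack changes direction in any of the $K$ steps and the single-direction geometry from Theorem \ref{Theorem_TBC_isolated_nodes_limit_theorem} puts the two stacks within reach. Both probabilities being bounded below by constants independent of $s$, the sum over $i \in I$ yields the required $c_1 \lambda_{d+1}(W_s)$ lower bound and Lemma \ref{Lemma_Apply_Malliavin_Stein_Bound_sTBC} closes the argument.
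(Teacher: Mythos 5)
Your proposal is correct and follows essentially the same route as the paper: the observation that the maximum scope $R_h$ is preserved under stacking gives (A), (C), (D) with the same constants as in the single-direction case, and (B) is obtained by the identical box decomposition, reducing to the strict positivity of $\PP[\Cyl(X_{i,1}) \cap \Cyl(X_{i,2}) = \emptyset]$ and its complement under $\QQ_K$. The paper is in fact terser than you are, simply asserting these probabilities are nonzero "as they can be estimated using $r$ and $R$," so your explicit telescoping bound on the segment displacements and your positivity arguments are welcome elaborations rather than deviations.
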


\begin{proof}
	Note that the maximum scope of a cylinder remains unchanged in the multi-directional setting. Thus, conditions (A), (C) and (D) are derived in the same way as in the proof of Theorem \ref{Theorem_TBC_isolated_nodes_limit_theorem}, using the same choices for $R$, $c_2$ and $c_3$. To verify (B) we use the same strategy as before and get to
	\begin{align*}
		&\VV_{X_{i,1},X_{i,2}}[\mathrm{Iso}_{Z(\xi_K)}(W_s)] \\
		&\quad = 4 \cdot \PP[\Cyl(X_{i,1}) \cap \Cyl(X_{i,2}) = \emptyset] \cdot \PP[\Cyl(X_{i,1}) \cap \Cyl(X_{i,2}) \neq \emptyset].
	\end{align*}
	Even in this modified setting it is clear that these probabilities are not zero, as they can be estimated using $r$ and $R$. It follows that (B) holds true, which concludes the proof.
\end{proof}

\bibliographystyle{alpha}
\bibliography{Library}
\end{document}